\theoremstyle{plain}  
\newtheorem{thm}{Theorem}[section]
\theoremstyle{definition}  
\newtheorem{defn}[thm]{Definition}
\theoremstyle{remark}  
\newtheorem{rem}[thm]{Remark}
\providecommand{\keywords}[1]{\textbf{\textit{Keywords:}} #1}
\newcommand{\rr}{\mathbb{R}}
\newcommand{\R}{{\mathbb{R}}}
\newcommand{\setof}[1]{\left\{ {#1}\right\}}
\newcommand{\sM}{{\mathsf M}}
\newcommand{\cF}{{\mathcal F}}
\newcommand{\cP}{{\mathcal P}}
\newcommand{\cQ}{{\mathcal Q}}
\newcommand{\cR}{{\mathcal R}}
\newcommand{\cT}{{\mathcal T}}
\def\mapright#1{\stackrel{#1}{\longrightarrow}}
\let\setof\relax
\DeclarePairedDelimiter{\setof}{\{}{\}}  
\DeclarePairedDelimiter{\paren}{(}{)}
\DeclarePairedDelimiter{\brak}{[}{]}
\renewcommand{\forall}{\text{ for all }}
\title{Extending combinatorial regulatory network modeling to include activity control and decay modulation}
\author[,a]{Bree Cummins\thanks{breschine.cummins@gmail.com}}
\author[,b,c]{Marcio Gameiro\thanks{gameiro@math.rutgers.edu}}
\author[,a]{Tom\'{a}\v{s} Gedeon\thanks{gedeon@math.montana.edu}}
\author[,d]{Shane Kepley\thanks{s.kepley@vu.nl}}
\author[,b]{Konstantin Mischaikow\thanks{mischaik@math.rutgers.edu}}
\author[,b]{Lun Zhang\thanks{lz210@rutgers.edu}}
\affil[a]{Department of Mathematical Sciences, Montana State University, Bozeman, MT, 59715}
\affil[b]{Department of Mathematics, Rutgers, The State University of New Jersey, Piscataway, NJ, 08854}
\affil[c]{Instituto de Ci\^{e}ncias Matem\'{a}ticas e de Computa\c{c}\~{a}o, Universidade de S\~{a}o Paulo, Caixa Postal 668, 13560-970, S\~{a}o Carlos, SP, Brazil}
\affil[d]{Department of Mathematics, VU Amsterdam, 1081 HV Amsterdam, The Netherlands}
\begin{document}

\maketitle

\begin{abstract}
Understanding how the structure of within-system interactions affects the dynamics of the system is important in many areas of science. We extend a network dynamics modeling platform DSGRN, which combinatorializes both dynamics and parameter space to construct finite but accurate summaries of network dynamics, to new types of interactions. While the standard  DSGRN assumes that each network edge controls the rate of abundance of the target node, the new edges may control either activity level or a decay rate of its target. While motivated by processes of post-transcriptional modification and ubiquitination in systems biology, our extension is applicable to the dynamics of any signed directed network.
\end{abstract}

\keywords{network dynamics, gene regulation, mathematical biology}


\section{Introduction}
\label{sec:intro}

Networks have become a paradigm for organizing relational information: each node is associated with a particular object or quantity and edges indicate a relation between these objects or quantities.
In many cases these relations are meant to capture causality, e.g.\ a directed  edge from node $m$ to node $n$ indicates that the product associated with node $m$ has an impact on the product associated with node $n$.
We refer to such a network as a \emph{regulatory network}.
Our main goal is to identify possible dynamics of a given a regulatory network.
In order to better specify the problem and make it relevant for applications the following challenges need to be addressed.
\begin{description}
\item[C1] The network structure must encode a sufficiently broad range of meaningful causal interactions so that the range of dynamics relevant to applications can be realized.
\item[C2] The computational framework should take the network structure as input and output an identification and characterization of global dynamics.
\item[C3] There needs to be a theoretical framework that ties the outputs of the computations back to the dynamics of the application of interest.
\end{description}
 
While the focus of this paper is on {\bf C2}, the motivation for this work is the analysis of regulatory networks arising from systems biology.
Thus, we will partially address {\bf C1} in the context of gene regulatory networks that include post-transcriptional modifications such as phosphorylation and ubiquitination. 
The theoretical validation of our approach, i.e.\ {\bf C3}, is based on previous and ongoing work \cite{kalies:mischaikow:vandervorst:14,cummins:gedeon:harker:mischaikow:mok,gedeon:cummins:harker:mischaikow:plos,gameiro:gedeon:kepley:mischaikow,gedeon:2020} and is discussed at relevant points in the paper.

We make two assumptions that are maintained throughout the paper.
\begin{description}
\item[A1] An ordinary differential equation (ODE) provides an adequate model for the dynamics.
\item[A2] Let $x_n$ denote the quantity of product associated with node $n$.
Then, the rate of change of $x_n$ can be expressed as 
\begin{equation}
\label{eq:generalNonlinearity}
    -\Gamma_n(x)x_n + \Lambda_n(x)
\end{equation}
where $\Gamma_n(x)$ and $\Lambda_n(x)$ quantify the rate of decay  and the rate of production of $x_n$, 
respectively. 
The network encodes the coordinates of  the state  $x$ upon which $\Gamma_n$ and $\Lambda_n$ are dependent, but we do not assume a particular functional form for this dependency.
\end{description}
Observe that we have refrained from writing \eqref{eq:generalNonlinearity} in the form of an ODE. 
This is to emphasize the fact that we use explicit functions for $\Gamma$ and $\Lambda$ only for computational purposes, but we are not interested in the dynamics  in the traditional sense, e.g.~trajectories or equilibria of the resulting differential equations. Instead, the appropriate interpretation of the dynamics obtained via our computations should be derived indirectly from associated lattice structures and algebraic topology, which is  part of {\bf C3}.  

Our approach to {\bf C2} is an extension to an earlier approach based on a combinatorial representation of the dynamics \cite{cummins:gedeon:harker:mischaikow:mok}.
At its foundation  lies the perspective that because we do not know the precise nonlinearities we should not try to identify dynamics on the level of trajectories.
Instead the goal is to provide a computationally efficient  robust combinatorial representation of the dynamics that, at a minimum, is capable of accurately identifying  existence and structure of attractors. 
Furthermore, since the dynamics that can be exhibited by a network is parameter dependent, it is desirable that there is a clear correspondence between parameters and global dynamics.
With this in mind we developed the Dynamic Signatures Generated by Regulatory Networks (DSGRN) software \cite{cummins:gedeon:harker:mischaikow:mok,Cummins2017b,gedeon:2020} that takes a network as input, creates an appropriate parameter space along with an explicit finite decomposition thereof, and  for each region of parameter space computes a combinatorial/algebraic topological description of the global dynamics (see Section~\ref{sec:dsgrn} for further details).

This approach has been applied to a variety of regulatory networks associated with questions and challenges from systems and synthetic biology including: identification of oscillatory behavior in a simple model of the p53 network  \cite{cummins:gedeon:harker:mischaikow:mok}, identification of minimal models for the switching behavior of the mammalian Rb-E2F system \cite{gedeon:cummins:harker:mischaikow:plos}, EMT \cite{xin:cummins:gedeon}, oocyte \cite{diegmiller}, and design of optimal 3 node hysteretic switches \cite{gameiro:gedeon:kepley:mischaikow}.
This variety of applications suggests that DSGRN is a potentially powerful tool for the global analysis of networks.
However, in the above mentioned biological contexts the current version of DSGRN imposes two significant constraints.
The first is that the decay rate $\Gamma_n$ is assumed to be constant, i.e.\ not controlled by other  nodes (representing protein concentrations)  within the network.
However, the fact that the 2004 Nobel Prize in Chemistry was awarded for the discovery that ubiquitination leads to protein decay \cite{hershko:ciechanover:rose, nobel} indicates that this is a rather severe assumption.
The second limitation is that, once produced, a protein has a constant efficacy with which it controls the activation or repression of  its targets.
This ignores the common phenomenon of protein-protein interactions that can have dramatic effect on the regulatory capabilities of the targeted protein and decisive effect on network function~\cite{ricci-tam}.
In other words, the current DSGRN does not allow for general enough interpretations/expression of  both $\Gamma_n$ and  $\Lambda_n$, i.e.\ it is lacking with respect to {\bf C1}.

Observe that the above mentioned biochemical constraints are associated with how the output of one node affects the  activity of another node and thus are expressed via edges within the regulatory network.
With this in mind in this paper we extend the DSGRN 
framework and the corresponding  software to allow for regulatory networks with edges of the form described in Figure~\ref{fig:RN+}.
More detailed descriptions of the meaning of this notation is provided in
Sections~\ref{sec:dsgrn} and \ref{sec:extension}.
For the moment it is sufficient to know that the diagrams indicate edges with the following properties.
\begin{enumerate}
\item The \underline{solid (type $0$)} edges of Figure~\ref{fig:RN+}(a) indicate that an increase in $x_1$ leads to a higher rate of production of $x_2$, while an increase in $x_3$ leads to a decrease in production of $x_4$.  
Regulatory networks consisting only of these types of edges can be handled by the original DSGRN software.
\item The \underline{dashed (type I)} edges indicated in Figure~\ref{fig:RN+}(b) indicate that an increase in $x_1$ increases the decay rate of $x_2$ while an increase in $x_3$ decreases the decay rate of $x_4$.
\item The \underline{type II} edges are pairs of edges where one edge connects a node to the other edge. A solid dot on the edge in Figure~\ref{fig:RN+}(c) indicates that the product from node 1 needs to be modified to become active. 
The pointed arrow from node 3 to the edge indicates that an increase in $x_3$  leads to an increase in the fraction of $x_1$ that is modified, while the blunt arrow from node 2 indicates that an increase in $x_2$ leads to a decrease in the fraction of $x_1$ that is modified.
The pointed arrow from node 1 indicates that once the modification occurs the product of 1 acts as an activator. 
The blunt arrow from node $1'$ indicates that once modified the output from node $1'$ acts as a repressor.
\item The empty dot on the receiving edge in Figure~\ref{fig:RN+}(d), also type II edges, indicates that the product from node 1 is active, but that it can be modified to be deactivated.
In particular, the blunt edge of node 3 indicates that $x_3$ decreases the fraction of $x_1$ that is  modified, while the pointed arrow of node 2 indicates that $x_2$ maintains or increases the fraction of $x_1$ that is modified.
\item We conclude the description by noting that activity  modifications described in Figure~\ref{fig:RN+}(c)-(d) can be also applied to dashed arrows from  Figure~\ref{fig:RN+}(b). That is, the solid and empty dot can be also  placed on dashed edges.
\end{enumerate}
The biologically minded reader may wish to identify Figure~\ref{fig:RN+}(b)  with the process of ubiquitination and processes in Figure~\ref{fig:RN+}(c) and (d) 
with phosphorylation and dephosphorylation, or other post-transcriptional and post-translational protein modification.

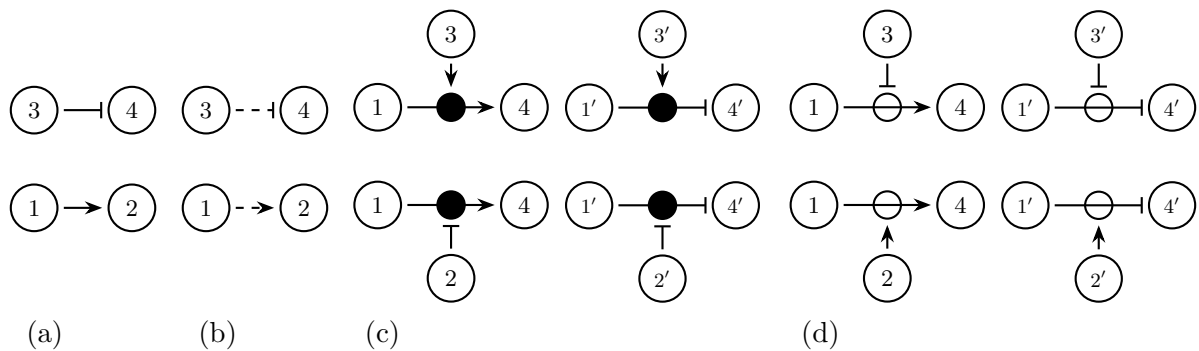
\begin{figure}[!htbp]
\begin{picture}(400,140)
\put(10,0) {(a)}
\put(0,15){
\begin{tikzpicture}[thick, main node/.style={circle, fill=white, draw, font=\footnotesize}, fake node/.style={circle}, scale=0.65]

\node[main node] (1) at (0,1.5) {$1$};
\node[main node] (2) at (2,1.5) {$2$};
\node[main node] (3) at (0,3.5) {$3$};
\node[main node] (4) at (2,3.5) {$4$};
\node[fake node] (5) at (1,0) {~~~};

\draw[-{Stealth}, shorten <= 2pt, shorten >= 2pt] (1) -- (2);
\draw[-|, shorten <= 2pt, shorten >= 2pt] (3) -- (4);
\end{tikzpicture}
}
\put(75,0){(b)}
\put(65,15){
\begin{tikzpicture}[thick, main node/.style={circle, fill=white, draw, font=\footnotesize}, fake node/.style={circle}, scale=0.65]

\node[main node] (1) at (0,1.5) {$1$};
\node[main node] (2) at (2,1.5) {$2$};
\node[main node] (3) at (0,3.5) {$3$};
\node[main node] (4) at (2,3.5) {$4$};
\node[fake node] (5) at (1,0) {~~~};

\draw[-{Stealth}, dashed, shorten <= 2pt, shorten >= 2pt] (1) -- (2);
\draw[-|, dashed, shorten <= 2pt, shorten >= 2pt] (3) -- (4);
\end{tikzpicture}
}
\put(138,0){(c)}
\put(130,15){
\begin{tikzpicture}[thick, main node/.style={circle, fill=white, draw, font=\footnotesize}, mid+ node/.style={circle, fill=black, draw}, scale=0.65]

\node[main node] (1) at (0,1.5) {$1$};
\node[main node] (2) at (1.5,0) {$2$};
\node[main node] (3) at (1.5,5) {$3$};
\node[main node] (4) at (3,1.5) {$4$};
\node[mid+ node] (5) at (1.5,1.5) {};
\node[main node] (6) at (0,3.5) {$1$};
\node[main node] (7) at (3,3.5) {$4$};
\node[mid+ node] (8) at (1.5,3.5) {};

\draw[-{Stealth}, shorten <= 2pt, shorten >= 2pt] (1) -- (4);
\draw[-|, shorten <= 2pt, shorten >= 2pt] (2) -- (5);
\draw[-{Stealth}, shorten <= 2pt, shorten >= 2pt] (6) -- (7);
\draw[-{Stealth}, shorten <= 2pt, shorten >= 2pt] (3) -- (8);
\end{tikzpicture}
}
\put(210,15){
\begin{tikzpicture}[thick, main node/.style={circle, fill=white, draw, font=\footnotesize}, mid+ node/.style={circle, fill=black, draw}, scale=0.65]

\node[main node, scale=0.85] (1) at (0,1.5) {$1'$};
\node[main node, scale=0.85] (2) at (1.5,0) {$2'$};
\node[main node, scale=0.85] (3) at (1.5,5) {$3'$};
\node[main node, scale=0.85] (4) at (3,1.5) {$4'$};
\node[mid+ node] (5) at (1.5,1.5) {};
\node[main node, scale=0.85] (6) at (0,3.5) {$1'$};
\node[main node, scale=0.85] (7) at (3,3.5) {$4'$};
\node[mid+ node] (8) at (1.5,3.5) {};

\draw[-|, shorten <= 2pt, shorten >= 2pt] (1) -- (4);
\draw[-|, shorten <= 2pt, shorten >= 2pt] (2) -- (5);
\draw[-|, shorten <= 2pt, shorten >= 2pt] (6) -- (7);
\draw[-{Stealth}, shorten <= 2pt, shorten >= 2pt] (3) -- (8);
\end{tikzpicture}
}
\put(303,0){(d)}
\put(295,15){
\begin{tikzpicture}[thick, main node/.style={circle, fill=white, draw, font=\footnotesize}, mid- node/.style={circle, draw}, scale=0.65]

\node[main node] (1) at (0,1.5) {$1$};
\node[main node] (2) at (1.5,0) {$2$};
\node[main node] (3) at (1.5,5) {$3$};
\node[main node] (4) at (3,1.5) {$4$};
\node[mid- node] (5) at (1.5,1.5) {};
\node[main node] (6) at (0,3.5) {$1$};
\node[main node] (7) at (3,3.5) {$4$};
\node[mid- node] (8) at (1.5,3.5) {};

\draw[-{Stealth}, shorten <= 2pt, shorten >= 2pt] (1) -- (4);
\draw[-{Stealth}, shorten <= 2pt, shorten >= 2pt] (2) -- (5);
\draw[-{Stealth}, shorten <= 2pt, shorten >= 2pt] (6) -- (7);
\draw[-|, shorten <= 2pt, shorten >= 2pt] (3) -- (8);
\end{tikzpicture}
}
\put(375,15){
\begin{tikzpicture}[thick, main node/.style={circle, fill=white, draw, font=\footnotesize}, mid- node/.style={circle, draw}, scale=0.65]

\node[main node, scale=0.85] (1) at (0,1.5) {$1'$};
\node[main node, scale=0.85] (2) at (1.5,0) {$2'$};
\node[main node, scale=0.85] (3) at (1.5,5) {$3'$};
\node[main node, scale=0.85] (4) at (3,1.5) {$4'$};
\node[mid- node] (5) at (1.5,1.5) {};
\node[main node, scale=0.85] (6) at (0,3.5) {$1'$};
\node[main node, scale=0.85] (7) at (3,3.5) {$4'$};
\node[mid- node] (8) at (1.5,3.5) {};

\draw[-|, shorten <= 2pt, shorten >= 2pt] (1) -- (4);
\draw[-{Stealth}, shorten <= 2pt, shorten >= 2pt] (2) -- (5);
\draw[-|, shorten <= 2pt, shorten >= 2pt] (6) -- (7);
\draw[-|, shorten <= 2pt, shorten >= 2pt] (3) -- (8);
\end{tikzpicture}
}
\end{picture}
\caption{(a) {\bf Type 0 edges.} Direct up regulation of 2 by 1 and direct down regulation of 4 by 3. (b) {\bf Type I edges.}  Node 1 increases decay rate of 2 and 3 decreases decay rate of 4. (c)  {\bf Type II edges.} Output of 1 has two states, unmodified and modified. The solid dot indicates that the product from node 1 needs to be modified to become active. The pointed arrow from 3 leads to modification of 1 (thus activates) and 2 leads to demodification of 1 (thus represses). (d) {\bf Type II edges} Output of 1 is has two states, unmodified and modified. The hollow dot indicates that the product from node 1 is active, but it can be modified to be deactivated. The blunt arrow from 3 leads to modification of 1 (thus represses) and 2 leads to demodification of 1 (thus activates).
}
\label{fig:RN+}
\end{figure}

Keeping in mind that the focus of this paper is on {\bf C2} the outline for this paper is as follows.
In Section~\ref{sec:dsgrn} we review the mathematics and combinatorics that provide the foundation for the current version of the DSGRN software.
As explained in this section, the most significant constraint on the software is the number of in and out-edges, as well as their \textit{interaction type} at any given node. The interaction type describes how the inputs at each node are combined to inform the node's output.
In Section~\ref{sec:extension} we describe the extension to DSGRN and via Tables~\ref{table:ubiquitination_logic} and \ref{table:phosphorylation_logic} we indicate the
interaction types
that the extended software can currently handle.
In principle these two sections provide sufficient information for a user to have a conceptual understanding of how the DSGRN software functions.

Of course, the DSGRN software has been constructed with the aim of being a useful tool in the analysis of regulatory networks that arise from applications. 
Thus in Section~\ref{sec:extension} we consider {\bf C1} and discuss ODE models arising from post-transcriptional regulation of gene regulatory networks and demonstrate how the computations that DSGRN performs can be identified with a singular limit of these systems.
In Section~\ref{sec:examples} we compare the range of global dynamics for networks based on edges of the type in Figure~\ref{fig:RN+}(a) against similar networks that allow for the full range of edge interaction, i.e.\ including Figures~\ref{fig:RN+}(b)-(d).

\section{DSGRN}
\label{sec:dsgrn}

The computational utility of the DSGRN software arises from two distinct combinatorial abstractions.
The first is a combinatorial representation of the dynamics.  
Furthermore, the expansion of DSGRN presented in this paper requires no adjustments to the previous versions regarding the representation of the dynamics and therefore we refer the reader to \cite{cummins:gedeon:harker:mischaikow:mok,gameiro:gedeon:kepley:mischaikow}
for details on the DSGRN approach.
The second is an explicit decomposition of parameter space into a finite collection of semi-algebraic sets, with the property that the combinatorial dynamics is constant for all parameters within each such semi-algebraic set.
A key point of this paper is the derivation of the proper decompositions for networks that contain edges of the form in Figure~\ref{fig:RN+}(b)-(d), and thus  we review the decomposition in this section. 

To explain how the DSGRN software decomposes parameter space we begin by considering the particularly simple network in Figure~\ref{fig:toggle}(a) (for complete details the reader is referred to  \cite{cummins:gedeon:harker:mischaikow:mok, gameiro:gedeon:kepley:mischaikow, kepley:mischaikow:zhang}).
Notice that the edges are type $0$ edges  shown in Figure~\ref{fig:RN+}(a).
As indicated in the introduction and compatible with the discussion of Figure~\ref{fig:RN+}, the decay rate for $x_n$ in \eqref{eq:generalNonlinearity} is assumed to be a positive constant that we denote by $\gamma_n$.
The rate of production of $x_1$ is given by $\Lambda_1(x_2)$ since the unique in-edge to node 1 comes from node 2. 
Similarly, the rate of production of $x_2$ is given by $\Lambda_2(x_1)$.
The edge $2 \dashv 1$ indicates that $x_2$ represses the production of $x_1$, while the edge $1 \rightarrow 2$ indicates that $x_1$ activates the production of $x_2$.
Step functions provide the simplest characterization of these phenomena, thus we introduce the functional expressions
\begin{equation}
    \label{eq:lambdaplusminus}
\lambda^+_{n,m}(x_m) := \begin{cases}
\ell_{n,m} & \text{if $x_m < \theta_{n,m}$} \\
\ell_{n,m} + \delta_{n,m}& \text{if $x_m > \theta_{n,m}$}
\end{cases}
\quad\text{and}\quad
\lambda^-_{n,m}(x_m) := \begin{cases}
\ell_{n,m} + \delta_{n,m} & \text{if $x_m < \theta_{n,m}$} \\
\ell_{n,m}& \text{if $x_m > \theta_{n,m}$}
\end{cases}
\end{equation}
where the parameters $\theta_{n,m}$, $\ell_{n,m}$, and $\delta_{n,m}$ are assumed to be positive.  
In particular, for the regulatory network of Figure~\ref{fig:toggle}(a) the rate of production of $x_1$ and $x_2$, given by \eqref{eq:generalNonlinearity}, takes the form
\begin{equation}
\label{eq:toggleSwitch}
    \begin{aligned}
  -\gamma_1 x_1 & + \lambda^-_{1,2}(x_2) \\
  -\gamma_2 x_2 & + \lambda^+_{2,1}(x_1) .
  \end{aligned}
\end{equation}
We associate four positive parameters to each node: $(\gamma_1,\theta_{2,1},\ell_{1,2},\delta_{1,2})$ to node 1, and $(\gamma_2,\theta_{1,2},\ell_{2,1},\delta_{2,1})$ to node 2.

Thus, the parameter space is $(0,\infty)^8 = (0,\infty)^4 \times (0,\infty)^4 $. The phase space is $(0,\infty)^2$.

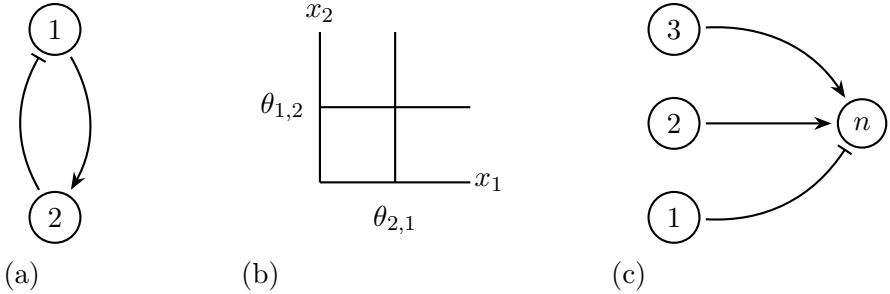
\begin{figure}[!htbp]
\centering
\begin{picture}(400,110)
\put(0,0){(a)}
\put(0,15){
\begin{tikzpicture}[thick, main node/.style={circle, fill=white, draw}, scale=2.5]

\node[main node] (1) at (0,1) {$1$};
\node[main node] (2) at (0,0) {$2$};

\draw[-{Stealth}, shorten <= 2pt, shorten >= 2pt] (1) to [bend left] (2);
\draw[-|, shorten <= 2pt, shorten >= 2pt] (2) to [bend left] (1);
\end{tikzpicture}
}
\put(90,0){(b)}
\put(90,15){
\begin{tikzpicture}[thick, scale=1.0]

\draw (0,0)--(2,0);
\draw (0,0)--(0,2);
\draw (1,0)--(1,2);
\draw (0,1)--(2,1);

\node at (-0.5,1) {$\theta_{1,2}$};
\node at (1,-0.5) {$\theta_{2,1}$};
\node at (0,2.25) {$x_2$};
\node at (2.25,0) {$x_1$};

\end{tikzpicture}
}
\put(230,0){(c)}
\put(240,15){
\begin{tikzpicture}[thick, main node/.style={circle, fill=white, draw}, scale=2.5]

\node[main node] (1) at (0,0) {$1$};
\node[main node] (2) at (0,0.5) {$2$};
\node[main node] (3) at (0,1) {$3$};
\node[main node] (4) at (1,0.5) {$n$};

\draw[-|, shorten <= 2pt, shorten >= 2pt] (1) to [bend right] (4);
\draw[-{Stealth}, shorten <= 2pt, shorten >= 2pt] (2) -- (4);
\draw[-{Stealth}, shorten <= 2pt, shorten >= 2pt] (3) to [bend left] (4);
\end{tikzpicture}
}
\end{picture}
\caption{(a) A 2-node network; (b) Decomposition of phase space of the network in (a) into 4 domains by thresholds; (c) Nodes 1, 2, and 3 affect node $n$ (see text for details).}
\label{fig:toggle}
\end{figure}

The functions $\lambda_{1,2}^-$ and $\lambda_{2,1}^+$ are constant off the hyperplanes $x_1 = \theta_{2,1}$ and $x_2 = \theta_{1,2}$, and thus there is a natural decomposition of phase space into rectangular regions (see Figure~\ref{fig:toggle}(b)).
We focus for the moment on the behavior of $x_1$, restricting our attention to whether within one of these regions $x_1$ is increasing or decreasing, i.e.\ on the sign of 
\begin{equation} \label{eq:sign}
-\gamma_1 x_1  + \begin{cases}
\ell_{1,2} + \delta_{1,2} & \text{if $x_2 < \theta_{1,2}$} \\
\ell_{1,2}& \text{if $x_2 > \theta_{1,2}$}
\end{cases}
\end{equation}

Note that since the first term is linear in $x_1$ it is sufficient to determine this at the threshold $x_1 = \theta_{2,1}$. Therefore, the answer depends entirely on the parameters, i.e.\  on one of the three possible relationships between parameters 
\begin{equation}
    \label{eq:ParameterDecomposition}
    \gamma_1\theta_{2,1} < \ell_{1,2} < \ell_{1,2} + \delta_{1,2}, \quad
    \ell_{1,2} < \gamma_1\theta_{2,1} < \ell_{1,2} + \delta_{1,2}, \quad
    \ell_{1,2} < \ell_{1,2} + \delta_{1,2} < \gamma_1\theta_{2,1} .
\end{equation}

\begin{rem}
\label{rem:OrderRelation}
As will be made clear shortly, it is useful to derive the relations of \eqref{eq:ParameterDecomposition} by beginning with the linear order $\ell_{1,2} < \ell_{1,2} + \delta_{1,2}$ and then considering all possible relative values of $\gamma_1\theta_{2,1}$.
\end{rem}

Observe that \eqref{eq:ParameterDecomposition} provides an explicit decomposition of $(0,\infty)^4$, the parameter space associated with node 1, that we codify as a graph
\begin{equation}
\label{eq:FactorGraph}
\begin{tikzpicture}[thick, main node/.style={rectangle, fill=white, draw, font=\small}, scale=1.0]

\node[main node] (1) at (0,0) {$\gamma_1\theta_{2,1} < \ell_{1,2} < \ell_{1,2} + \delta_{1,2}$};
\node[main node] (2) at (5,0) {$\ell_{1,2} < \gamma_1\theta_{2,1} < \ell_{1,2} + \delta_{1,2}$};
\node[main node] (3) at (10,0) {$\ell_{1,2} < \ell_{1,2} + \delta_{1,2} < \gamma_1\theta_{2,1}$};

\draw[-, shorten <= 2pt, shorten >= 2pt] (1) -- (2);
\draw[-, shorten <= 2pt, shorten >= 1pt] (2) -- (3);
\end{tikzpicture} 
\end{equation}
The nodes (represented by the rectangles) represent the regions in parameter space and edges indicate a single equality that defines adjacency of the regions.
We refer to this graph as the \emph{factor graph} associated to node 1 and denote it by $PG(1)$.
There is a similar factor graph for node 2 and the full \emph{parameter graph} is given by $PG := PG(1)\times PG(2)$.
Observe that this provides a decomposition of the full parameter space $(0,\infty)^8$ into 9 parameter domains.

We now consider a general regulatory network with  edges of the form of Figure~\ref{fig:RN+}(a).
The most significant difference is that a node may have multiple in-edges.
For the sake of notational simplicity assume that  node $n$ has $K$ in-edges coming from nodes $1,\ldots, K$, in which case by \eqref{eq:generalNonlinearity} we are interested in
\[
-\gamma_n x_n + \Lambda_n(x_1,\ldots,x_K).
\]
We make use of the following definition to express the allowable form of $\Lambda_n$.

\begin{defn}
\label{def:interaction_function}
\emph{Following \cite[Definition 1.1]{kepley:mischaikow:zhang}, an \emph{interaction function} of order $K$ is a polynomial in $K$ variables $z=(z_1,\ldots,z_K)$ of the form 
\[
f(z) := \prod_{j=1}^q f_j(z)
\]
where each factor has the form 
\[
f_j(z) = \sum_{i\in I_j}z_i
\]
and the indexing sets $\setof{I_j\mid 1\leq j\leq q}$ form a partition of $\setof{1,\ldots, K}$.
The interaction type of $f$ is $(k_1,\ldots, k_q) \in \mathbb{N}^q$
where $k_j$ denotes the number of elements of $I_j$ with $k_j \leq k_{j+1}$.}
\end{defn}

\begin{rem}
We denote the interaction type $(k_1, \ldots, k_q)$ with the convention that $k_j \leq k_{j+1}$ to match with the indexing used by DSGRN (\cite{kepley:mischaikow:zhang} uses the convention $k_j \geq k_{j+1}$).
\end{rem}

We assume that the  function $\Lambda_n$ is given by
\begin{equation}\label{eq:ln}
\Lambda_n(x_1,\ldots,x_K) = f \paren*{\lambda_{n,1}^{\pm}(x_1), \lambda_{n,2}^{\pm}(x_2), \dotsc, \lambda_{n, K}^{\pm}(x_K)},
\end{equation}
where $f$ is an interaction function of order $K$ and the sign of $\lambda_{n,j}^{\pm}$ is determined by the edge from node $j$ to node $n$.

In a slight abuse of notation, we sometimes also refer to the polynomial expression of the interaction function $f$ as its interaction type. As an example for the interaction function $f(z) = z_1(z_2+z_3)$ of type $(1,2)$, we denote this interaction type as $z_1(z_2+z_3)$. This notation conveys the same information in a less compressed format and will be useful when describing type I and II edges.

Consider the following example.
Let node $n$ have three in-edges as shown in Figure~\ref{fig:toggle}(c).
Given that $1 \dashv n$, $2\rightarrow n$ and $3\rightarrow n$, a biologically motivated choice for $\Lambda_n$ is
\begin{align*}
&\Lambda_n(x_1,x_2,x_3)  = \lambda_{n,1}^-(x_1) \left(\lambda_{n,2}^+(x_2) + \lambda_{n,3}^+(x_3) \right)\\
&= \left(
\begin{cases}
\ell_{n,1} + \delta_{n,1} & \text{if}~ x_1<\theta_{n,1} \\
\ell_{n,1} & \text{if}~ x_1 > \theta_{n,1}
\end{cases}
\right)
\left(
\begin{cases}
\ell_{n,2} & \text{if}~ x_2<\theta_{n,2} \\
\ell_{n,2} + \delta_{n,2} & \text{if}~ x_2 > \theta_{n,2}
\end{cases} +
\begin{cases}
\ell_{n,3} & \text{if}~ x_3< \theta_{n,3} \\
\ell_{n,3} + \delta_{n,3} & \text{if}~ x_3 > \theta_{n,3}
\end{cases}
\right)
\end{align*}
The associated interaction function of type $(k_1, k_2)=(1,2)$ is 
\[
f(z) = z_1(z_2+z_3).
\]

Observe that the values that $\Lambda_n$ can assume are given by the following eight polynomials in parameters $\ell$ and $\delta$

\begin{align*}
p_0 & = \ell_{n,1} (\ell_{n,2} + \ell_{n,3}) &
p_4 & = \ell_{n,1} (\ell_{n,2} + \ell_{n,3} + \delta_{n,3}) \\
p_1 & = (\ell_{n,1} + \delta_{n,1}) (\ell_{n,2} + \ell_{n,3}) &
p_5 & =  (\ell_{n,1} + \delta_{n,1}) (\ell_{n,2} + \ell_{n,3} + \delta_{n,3}) \label{eq:pi3node} \\
p_2 & = \ell_{n,1} (\ell_{n,2} + \delta_{n,2} + \ell_{n,3}) &
p_6 & =  \ell_{n,1} (\ell_{n,2} + \delta_{n,2} + \ell_{n,3} + \delta_{n,3}) \\
p_3 & = (\ell_{n,1} + \delta_{n,1}) (\ell_{n,2} + \delta_{n,2} + \ell_{n,3}) &
p_7 & = (\ell_{n,1} + \delta_{n,1}) (\ell_{n,2} + \delta_{n,2} + \ell_{n,3} + \delta_{n,3}).
\end{align*}

To put this into proper perspective we return to Remark~\ref{rem:OrderRelation}.
For $\Lambda_1$, arising in the case where node 1 has a single in-edge, we have the polynomials 
\[
p_0 = \ell_{1,2}\quad\text{and}\quad p_1= \ell_{1,2}+\delta_{1,2}.
\]
Observing that $p_0 < p_1$ is the only admissible linear order for the values of these polynomials allows us to determine the decomposition of parameter space.
The reader can check that the complete list of admissible linear orders associated with $\Lambda_n$ is exactly
\begin{equation*}
\begin{aligned}
(0, 1, 2, 3, 4, 5, 6, 7) \\ 
(0, 1, 2, 3, 4, 6, 5, 7) \\
(0, 1, 2, 4, 3, 5, 6, 7) \\
(0, 1, 2, 4, 3, 6, 5, 7) \\
(0, 4, 2, 6, 1, 5, 3, 7) 
\end{aligned}
\quad 
\begin{aligned}
 (0, 1, 2, 4, 6, 3, 5, 7) \\
 (0, 1, 4, 2, 5, 3, 6, 7) \\
 (0, 1, 4, 2, 5, 6, 3, 7) \\
 (0, 1, 4, 2, 6, 5, 3, 7) \\
 (0, 4, 1, 5, 2, 6, 3, 7)
\end{aligned}
\quad 
\begin{aligned}
 (0, 1, 4, 5, 2, 3, 6, 7) \\
 (0, 1, 4, 5, 2, 6, 3, 7) \\
 (0, 2, 1, 3, 4, 6, 5, 7) \\
 (0, 2, 1, 4, 3, 6, 5, 7) \\
 (0, 4, 2, 1, 6, 5, 3, 7)
\end{aligned}
\quad 
\begin{aligned}
 (0, 2, 1, 4, 6, 3, 5, 7) \\
 (0, 2, 4, 1, 6, 3, 5, 7) \\
 (0, 2, 4, 6, 1, 3, 5, 7) \\
 (0, 4, 1, 2, 5, 6, 3, 7) \\
  (0, 4, 1, 2, 6, 5, 3, 7)
\end{aligned}
 \end{equation*}
 where $(0, 1, 2, 3, 4, 5, 6, 7)$ corresponds to $p_0 < p_1< p_2< p_3< p_4< p_5< p_6< p_7$.
To determine the associated factor graph we consider all possible  values of $\setof{\gamma_n\theta_{m,n}}_m$ relative to each linear order where $m$ ranges over all out-edges of node $n$.
This produces the factor graph $PG(n)$.

As is indicated in \cite{cummins:gedeon:harker:mischaikow:mok} and \cite[Table 1]{kepley:mischaikow:zhang} the complete list of admissible linear orders have been determined for $\Lambda_n$ associated with interaction functions of type
$(k_1,\ldots, k_q)$ given by
\begin{align}
\label{eq:logic_list}
& (1) \nonumber \\
& (1,1),\; (2) \nonumber \\
& (1,1,1),\; (1,2),\; (3) \nonumber \\
& (1,1,1,1),\; (1,1,2),\; (2,2),\; (1,3),\; (4)  \\
& (1,1,1,1,1),\; (1,1,1,2),\;  (5) \nonumber \\
& (1,1,1,1,1,1),\;  (6) \nonumber
\end{align}

Thus, the DSGRN software can, in principle, take as input any network consisting of edges of type  $0$ in Figure~\ref{fig:RN+}(a) with the restriction that at any node $n$ the production function $\Lambda_n$ is given by one of the  interaction functions  of the type listed  above. Using pre-computed factor graphs $PG(n)$, DSGRN software constructs the parameter graph as the product $PG = \prod_{n=1}^N PG(n)$. 
In applications, the most serious restriction is that size of the factor graphs grows rapidly as a function of the number of variables, e.g.\ for a node with interaction function of type $(6)$ with one out-edge the factor graph has 89,414,640 elements.

A goal of this paper is to identify the factor graphs, and hence make the DSGRN software applicable, for networks that involve edges of type Figure~\ref{fig:RN+}(b)-(d).

\begin{rem}
There is an alternative  characterization of parameter nodes in a factor parameter graph  as collections of Boolean functions.  Any input polynomial to node $n$  can be represented as a Boolean string of length $k$ -- the number of inputs to node $n$. As an example consider node $n$ with  three in-edges as shown in Figure~\ref{fig:toggle}(c) and discussed above. Then each of the polynomials $p_0, \ldots, p_7$ can be represented as a Boolean string $(b_1, b_2,b_3)$, $b_i \in B = \{0,1\}$ by assigning $b_i=0$ if $p$ contains $\ell_{n,i}$ and $b_i = 1$ if $p$ contains  $\ell_{n,i} + \delta_{n,i}$. 
Since  for each  threshold  $\theta_{m,n}$ of node $n$, the  parameter node contains 
a linear  order 
\[
p_0 < \ldots < p_i < \theta_{m,n} <p_j < \ldots < p_7,
\]
we can associate to it a Boolean function $g_{m,n}: B^3 \to B$ by 
setting $g_{m,n}(p_s) = 0$ if and only if $p_s < \theta_{m,n}$.
Therefore a node of a factor parameter graph that corresponds to a node $n$ of a regulatory network with $k$ inputs and $m$ outputs corresponds to a collection of $m$ Boolean functions with $k$ inputs. For a more detailed description see~\cite{crawford-kahrl:cummins:gedeon:2021}.
\end{rem}

\subsection{State transition graph}

To close the review of DSGRN, we discuss how dynamics is associated to each parameter node.
This association is the same for new types of interactions presented in this paper. 
As indicated in (\ref{eq:sign}), for all parameters that belong to a domain in the parameter space represented by a  parameter node, each $x_n$ is either increasing or decreasing on the boundaries of domains bounded by 
by thresholds  $\theta_{j,n}$.
This leads to a construction of a state transition graph that represents the dynamics of the system. 

To simplify the explanation we assume that the gene regulatory network does not contain repressing self edges, as was assumed in the original DSGRN \cite{cummins:gedeon:harker:mischaikow:mok}.
This assures that the state transition graph, as we describe below, is well defined. However, this is not an essential limitation. In \cite{gameiro:gedeon:kepley:mischaikow} this problem was fully resolved by replacing the threshold corresponding to a repressing self-edge by two thresholds and thus extending the state transition graph. In this paper we apply the same method used in \cite{gameiro:gedeon:kepley:mischaikow} to replace the thresholds corresponding to all self-edges in the network (both repressing and activating) by two thresholds.

We also allow networks with multiple edges between pairs of nodes and use the DSGRN extension described in \cite{gameiro:mischaikow:zheleznyak:21} to construct the state transition graph in that case.

The collection of hyperplanes $\setof{x_m = \theta_{*,m}}$, $m=1,\ldots, N$, provides a cubical decomposition of the phase space $(0,\infty)^N$.
We refer to the associated $N$-dimensional cubes as \emph{domains}, and 
these domains define the vertices of the state transition graph $\cF$.
To define the edges of $\cF$, let $\kappa$ be a domain, with an $(N-1)$-dimensional face $\sigma$ that is a subset of the hyperplane $x_n=\theta_{m,n}$.
The sign of $-\gamma_n \theta_{m,n} + \Lambda_n(\kappa)$ 
determines whether $\sigma$ points into or away from $\kappa$.
Note that for all parameters belonging to a parameter node of $PG$ this sign is the same and thus the following construction produces the same $\cF$.
The edges of the state transition graph $\cF$ are now defined by the following two rules.
\begin{description}
\item[R1] If all the faces of $\kappa$ point into $\kappa$, then $\kappa$ has a self edge.
\item[R2] If $\sigma$ is an $(N-1)$-dimensional face of two domains $\kappa$ and $\kappa'$ and $\sigma$ points away from $\kappa'$ and into $\kappa$, then there is an edge from $\kappa'$ to $\kappa$.
\end{description}

Given a directed graph $\cF,$ the condensation graph $\cF^{\text{SCC}},$ can be identified in linear time  \cite{cormen:leiserson:rivest:stein}.
Recall that $\cF^{\text{SCC}}$ is a directed acyclic graph with one node for each strongly connected component (SCC) of $\cF$ and hence a poset.
We define a SCC to be \emph{nontrivial} if it contains at least one edge.
We define $(\sM(\cF),\leq)$, the \emph{Morse poset} of $\cF$, to be the subposet of $\cF^{\text{SCC}}$  consisting of the nontrivial SCCs. 
The Haase diagram for $\sM(\cF)$ is called the \emph{Morse graph}.

An obvious question about the DSGRN state transition graph dynamics, characterized by Morse graphs, is whether this characterization applies to network ODE models where smooth nonlinearities approximate the piecewise constant functions $\Lambda_n(x)$. The correspondence between individual solutions of such a smooth perturbed system and a system generated by ODEs with right-hand sides (\ref{eq:generalNonlinearity}) was studied by Ironi~\cite{Ironi2011}; the correspondence between the lattices of attractors for ODEs in $\R^2$ was established in \cite{gedeon:harker:kokubu:mischaikow:oka}. As is demonstrated in~\cite{duncan1} there is a close correspondence between equilibria and their stability for the smooth perturbed systems and the characterization provided by DSGRN. Finally, bifurcations of DSGRN equilibria within a class of ramp function perturbations has been examined in~\cite{duncan2}.

We conclude this section with a summary:
\begin{enumerate}
\item For a given regulatory network and a type of interaction between inputs at each node of the network, there is a finite decomposition of the parameter space, encoded as the parameter graph $PG$, such that all parameters in a single parameter node admit the same state transition graph STG.
\item The long term dynamics of the STG is represented by its Morse graph.
\item The parameter graph $PG$ with a Morse graph at each vertex encodes a finite representation of the dynamics of the regulatory network.
\end{enumerate}

\section{Extension of DSGRN}
\label{sec:extension}

In this section we consider  three generalizations of the parameter space decomposition (PSD) problem originally described in \cite{kepley:mischaikow:zhang} which we review briefly.

Suppose that node $n$ has $K$ input edges which are labeled $\setof*{1, \dotsc, K}$ and assume that the rate expression for $x_n$ is given by $-\gamma_n x_n + \Lambda_n(x)$, which we refer to as a {\em classical} rate expression. Assume that $f$ is an interaction function as defined in Definition \ref{def:interaction_function} with order $K$ and type $(k_1,\dotsc, k_q)$, and $\Lambda_n$ is given by the formula \eqref{eq:ln}.
Observe that 
the image of $\Lambda_n$ consists of  
$2^K$ values
\[
\cP := \setof*{f(z_1, \dotsc, z_K) \mid z_j \in \{\ell_{n, j}, \ell_{n, j} + \delta_{n,j}\}, \  1 \leq j \leq K}
\]
regardless of the sign of $\lambda_{n,j}^{\pm}$ for $1 \leq j \leq K$, and independent of the interaction type of $f$. The elements of $\cP$ are polynomial expressions in the $2K$ parameters $\setof{\ell_{n,j}, \delta_{n,j} \mid 1 \leq j \leq K}$. 

The \emph{PSD problem} associated with $f$ is to compute all possible linear orders of $\cP$ subject to the constraints $\ell_{n,j}, \delta_{n,j} > 0$ for all $1 \leq j \leq K$. 
In \cite{kepley:mischaikow:zhang} it is shown that given a solution of the PSD problem, the factor graph $PG(n)$ can be recovered with a trivial amount of post-processing.
In general, rigorously solving the PSD problem is difficult, but
it only needs be solved once for each interaction type and the results stored, see \eqref{eq:logic_list}.

In the remainder of this section we introduce the generalization of the PSD problem for the type I and  type II edges defined in Section~\ref{sec:intro}. These new rate expressions give rise to  new  PSD problems which we define below. Then, we demonstrate that the solutions of these new PSD problems can be obtained by creatively using   classical PSD solutions.

\subsection{Type I edges}
We start with the PSD for a rate expression that includes type I edges shown in Figure \ref{fig:RN+}(b). As is the case throughout this paper the mathematical expression governing the production of $x_n$ has the form \eqref{eq:generalNonlinearity} with $x = (x_1, \ldots, x_N) \in \R^N$.

However, to emphasize that we assume that a type I edge impacts  decay  as opposed to production we highlight the distinction of the variables and write
\begin{equation}
\label{eq:targeted_degredation_production_rate}
-\Gamma_n(\tilde{x})x_n + \Lambda_n(x)
\end{equation}
where $\Lambda_n$ has the form \eqref{eq:ln} and involves only type 0 edges, that is, it is defined by an interaction function $f$ of order $K$ and depends on the state variables $x_1, \ldots, x_K$. We will refer to $\Lambda_n$ as the \emph{classical production rate expression}. The function $\Gamma_n$ also has the form \eqref{eq:ln}, but involving only type I edges, that is, it is an interaction function composed of step functions that depend on the state variables $\tilde{x}_j \in \{ x_1, \ldots, x_N \}$ for $j = 1, \ldots, \tilde{K}$,
where the state variables $\{ x_1, \ldots, x_K \}$ and $\{ \tilde{x}_1, \dotsc, \tilde{x}_{\tilde{K}} \}$ form disjoint sets. Specifically,
\[
\Gamma_n(\tilde{x}) = \tilde{f} \paren*{\tilde{\lambda}_{n,1}^{\pm}(\tilde{x}_1), \tilde{\lambda}_{n,2}^{\pm}(\tilde{x}_2), \dotsc, \tilde{\lambda}_{n,\tilde{K}}^{\pm}(\tilde{x}_{\tilde{K}})}
\]
where $\tilde{f}$ is an interaction function of order $\tilde{K}$, and interaction type $(\tilde{k}_1, \dotsc, \tilde{k}_{\tilde{q}})$. For $1 \leq j \leq \tilde{K}$, $\tilde{\lambda}_{n,j}^{\pm}$ is a step function depending on parameters $\{ \tilde{\ell}_{n,j}, \tilde{\delta}_{n,j} \}$ and  the sign of $\tilde{\lambda}_{n, j}^{\pm}$ is determined by the type of edge in Figure \ref{fig:RN+}(b). By a similar observation as in the classical rate expression 
we note that $\Gamma_n$ is a simple function which takes the values
\[
\tilde{\cP} := \setof*{\tilde{f} \paren*{z_1, \dotsc, z_{\tilde{K}}} \mid z_j \in \setof*{\tilde{\ell}_{n, j}, \tilde{\ell}_{n,j} + \tilde{\delta}_{n, j}}, \ 1 \leq j \leq \tilde{K}}
\]
regardless of signs of each step function or the interaction type of $\tilde{f}$.

Analogous to the classical rate expression, we are interested in determining the sign of a \textit{type I rate expression} of the form

\begin{equation}
\label{eq:typeI_sign_expression}
-\Gamma_n(\tilde{x}) \theta_{*, n} + \Lambda_n(x)
\end{equation}
where  $\theta_{*, n}$ is an arbitrary threshold associated to an outgoing edge whose source is node $n$. 

A crucial observation is that the sign determination of \eqref{eq:typeI_sign_expression} depends only on the value of $\theta_{*, n}$ relative to the expression $\Lambda_n(x) / \Gamma_n(\tilde{x})$.
The problem of sign determination for \eqref{eq:typeI_sign_expression} over all parameter values can be  reduced to the problem of determining all possible linear orders of the finite set
\begin{equation}
\cR := \setof*{\frac{p}{\tilde{p}} \mid p \in \cP, \ \tilde{p} \in  \tilde{\cP}},
\end{equation}
subject to the constraints $\ell_{n,i}, \delta_{n,i} > 0$ for $1 \leq i \leq K$ and $\tilde{\ell}_{n,j}, \tilde{\delta}_{n,j} > 0$ for $1 \leq j \leq \tilde{K}$.
This is the \emph{joint PSD problem} associated to the pair $(\tilde{f}, f)$ with a {\em joint interaction type} denoted by $(\tilde{k}_1, \dotsc, \tilde{k}_{\tilde{q}}; k_1, \dotsc, k_q)$. As in the classical case, given a solution of the joint PSD problem the sign of Equation \eqref{eq:typeI_sign_expression} can be determined for the entire parameter space with trivial post-processing.

Similarly to the case of a type 0 rate expression, we use a shorthand notation to describe the interaction type of a type I rate expression. In this case we put the polynomial expression of $\tilde{f}$ within $\langle \cdot \rangle$ and add to that the polynomial expression of $f$, that is, we denote the interaction type by
$\langle \tilde{f} \rangle + f$. As an example, assuming we are using a rate expression with interaction type $(1; 2)$ for node $n$ in Figure~\ref{fig:phosphorylation_edges}(a), we express this fact by saying that the interaction type of node $n$ is
\[
\langle x \rangle + y + z.
\]

Before presenting the general result, we demonstrate how to obtain the linear orders of a type I rate expression in a simple example. 

\subsection{Example}
Consider the simplest possible type I rate expression for $x_n$, 
\[
-\Gamma_n(x_1) x_n + \Lambda_n(x_2)
\]
where $x_1$ and $x_2$ are state variables. For simplicity, we suppress the dependence on $n$ which usually appears in parameter subscripts and assume that $x_1$ and $x_2$ are both promoters of $x_n$ so that we have the formulas  
\[
\Gamma_n(x_1) = 
\begin{cases}
\ell_1 + \delta_1 & \text{if} \ x_1 < \theta_1 \\
\ell_1 & \text{if} \ x_1 > \theta_1 \\
\end{cases}
\qquad 
\Lambda_n(x_2) =
\begin{cases}
\ell_2 & \text{if} \ x_2 < \theta_2 \\
\ell_2 + \delta_2 & \text{if} \ x_2 > \theta_2 \\
\end{cases}
\]
where $\setof*{\ell_1, \delta_1, \theta_1, \ell_2, \delta_2, \theta_2}$ are positive parameters. The interaction type associated with this example is $(1; 1)$ and it is denoted by $\langle z_1 \rangle + z_2$.

The associated joint PSD problem is to determine all admissible linear orders of the following rational expressions
\[
\cR := \setof*{\frac{\ell_2}{\ell_1}, \frac{\ell_2 + \delta_2}{\ell_1}, \frac{\ell_2}{\ell_1 + \delta_1}, \frac{\ell_2 + \delta_2}{\ell_1 + \delta_1}}
\]
subject to the constraints that all parameters must be strictly positive. 

Define the polynomials
\[
p_0 = \ell_2, \quad p_1 = \ell_2 + \delta_2, \quad \tilde{p}_0 = \ell_1, \quad \tilde{p}_1 = \ell_1 + \delta_1
\]
which we collect into distinct subsets $\cP := \setof*{p_0, p_1}$ and $\tilde{\cP} := \setof*{\tilde{p}_0, \tilde{p}_1}$. We let the set $\cR$ inherit this indexing by writing $\cR = \setof*{r_{ij} \mid r_{ij} = \frac{p_i}{\tilde{p}_j}, \  0 \leq i, j, \leq 1}$. Let $\mu = (\ell_1, \delta_1, \ell_2, \delta_2)$ denote an arbitrary parameter vector and observe that for any $\mu \in (0, \infty)^4$ we have
\[
r_{01}(\mu) < r_{00}(\mu) < r_{10}(\mu) \qquad \text{and} \quad r_{01}(\mu) < r_{11}(\mu)< r_{10}(\mu)
\]
which induces a partial order on $\cR$ given by $r_{01} \prec \setof*{r_{00}, r_{11}} \prec r_{10}$. One easily checks that $\mu$ can be chosen such that either linear order of the set $\setof*{r_{00}(\mu), r_{11}(\mu)}$ is possible. Thus, there are exactly two admissible linear extensions of $(\cR, \prec)$ given by 
\begin{equation}
\label{eq:example_<x>+y_orders}
\paren*{r_{01}, r_{00}, r_{11}, r_{10}}, \qquad \paren*{r_{01}, r_{11}, r_{00}, r_{10}}.
\end{equation}

We compare this result to a PSD problem for interaction type $(1,1)$ which corresponds to a classical rate expression of the form 
\[
-\gamma_n x_n + \Lambda_n(x_1, x_2)
\]
where $\gamma_n$ is a positive parameter and $\Lambda_n$ is given by the formula
\[
\Lambda_n(x_1, x_2) = \lambda^+_1(x_1) \lambda^+_2(x_2). 
\]
The classical PSD is to determine all possible linear orders of the polynomials
\[
\cQ := \setof*{\ell_2 \ell_1, \ell_2 (\ell_1 + \delta_1), (\ell_2 + \delta_2) \ell_1, (\ell_2 + \delta_2) (\ell_1 + \delta_1)} = \setof*{p_0 \tilde{p}_0, p_0 \tilde{p}_1, p_1 \tilde{p}_0, p_1 \tilde{p}_1}. 
\]
In the previous version of DSGRN these 4 polynomials are endowed with a partial order by assigning labels via the indexing
\[
q_0 := \ell_1 \ell_2, \quad 
q_1 := (\ell_1 + \delta_1) \ell_2, \quad
q_2 := \ell_1 (\ell_2 + \delta_2), \quad
q_3 := (\ell_1 + \delta_1)(\ell_2 + \delta_2)
\]
with the partial order $q_0 \prec \setof*{q_1, q_2} \prec q_3$. One easily checks that both possible orders between $q_1$ and $q_2$ can be satisfied. Thus, the solution to this classical PSD problem is the two possible linear orders
\begin{equation}
\label{eq:example_xy_orders}
(0, 1, 2, 3), \qquad (0, 2, 1, 3),
\end{equation}
which is stored in DSGRN. However, if instead we indexed the same 4 polynomials as follows 
\[
\bar{q}_0 := p_0 \tilde{p}_0, \quad 
\bar{q}_1 := p_1 \tilde{p}_0, \quad
\bar{q}_2 := p_0 \tilde{p}_1, \quad 
\bar{q}_3 := p_1 \tilde{p}_1,
\]
then we observe that 
\[
\bar{q}_1(\mu) < \bar{q}_2(\mu) \iff \frac{p_1(\mu)}{\tilde{p}_1(\mu)} < \frac{p_0(\mu)}{\tilde{p}_0(\mu)} \iff r_{11}(\mu) < r_{00}(\mu).
\]
It follows that finding all admissible linear orders of $\cR$ is equivalent to finding all admissible linear orders of $\cQ$ with respect to the new indexing. However, this has already been computed and stored in DSGRN and we only require the correct change of indexing on these polynomials in order to reuse it and this is trivial to store in a table and lookup as needed. 

Intuitively, choosing the relative order of the two ``free'' polynomials for the $(1,1)$ PSD problem is equivalent to choosing the relative order of the two ``free'' rational expressions for the $(1;1)$ PSD problem. Moreover, given the two possible linear orders for $\cQ$, the two admissible orders for $\cR$ are immediately recovered.

Next, we prove that solving any joint PSD problem is equivalent to solving a related classical PSD problem. In particular, the joint PSD problem can be solved using the algorithms described in \cite{kepley:mischaikow:zhang}. 

\begin{thm}
\label{thm:type_I_bijection}
Suppose $x_n$ is governed by a type I rate expression $-\Gamma_n(\tilde{x}) x_n + \Lambda_n(x)$ with associated interaction functions $\tilde{f}$ and $f$ of orders $\tilde{K}$ and $K$ and interaction types $(\tilde{k}_1, \dotsc, \tilde{k}_{\tilde{q}})$ and $(k_1, \dotsc, k_q)$ respectively, with $\{ x_1, \ldots, x_K \}$ and $\{ \tilde{x}_1, \dotsc, \tilde{x}_{\tilde{K}} \}$ disjoint sets of state variables.

Then, the joint PSD problem associated to $(\tilde{f}, f)$ is equivalent to the classical PSD problem associated to the interaction function $f \cdot \tilde{f}$ in the following sense.
\begin{enumerate}
\item There is bijection between solutions of the joint PSD problem for $(\tilde{f}, f)$ and solutions of the classical PSD problem associated with $f \cdot \tilde{f}$. 
\item This bijection can be efficiently and explicitly constructed i.e.~the admissible linear orders for the joint PSD problem can be efficiently recovered given the admissible linear orders for the classical PSD problem.
\end{enumerate}
\end{thm}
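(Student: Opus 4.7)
The plan is to exploit the elementary identity that for positive quantities $p/\tilde p < p'/\tilde p' \iff p \tilde p' < p' \tilde p$. This lets me identify the hyperplane arrangement cutting parameter space into chambers for the joint PSD problem with the arrangement for the classical PSD problem associated to $f \cdot \tilde f$, and the bijection of admissible linear orders then follows by matching chambers.

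First I would set up notation. Let $\cR = \setof*{r_{p, \tilde p} := p/\tilde p \mid p \in \cP,\ \tilde p \in \tilde \cP}$ be the set ordered by the joint PSD, and observe that the product $f \cdot \tilde f$ is itself a valid interaction function of order $K + \tilde K$, its underlying partition being the disjoint union of the partitions of $f$ and $\tilde f$ (after relabeling the $\tilde x$-variables as $x_{K+1}, \ldots, x_{K + \tilde K}$). Its image is $\cQ = \setof*{q_{p, \tilde p} := p \cdot \tilde p \mid p \in \cP,\ \tilde p \in \tilde \cP}$, which is exactly the set ordered by the classical PSD for $f \cdot \tilde f$. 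Both $\cR$ and $\cQ$ are indexed by $\cP \times \tilde \cP$ and have cardinality $2^{K + \tilde K}$. Next, for any parameter $\mu$ with positive coordinates,
\[
r_{p, \tilde p}(\mu) < r_{p', \tilde p'}(\mu) \iff q_{p, \tilde p'}(\mu) < q_{p', \tilde p}(\mu),
\]
so as the tuple $(p, \tilde p, p', \tilde p')$ varies, the joint hyperplanes $\{r_{p, \tilde p} = r_{p', \tilde p'}\}$ and the classical hyperplanes $\{q_{p, \tilde p'} = q_{p', \tilde p}\}$ form the same family, and this family exhausts all hyperplanes of the classical PSD on $\cQ$. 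Consequently the joint and classical PSDs induce identical semi-algebraic decompositions of $(0, \infty)^{2(K + \tilde K)}$ into chambers of constant linear order, and the map ``chamber $\mapsto$ (joint order, classical order)'' delivers the desired bijection.

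For the constructive claim (item 2), I would verify that given a classical admissible order $L_\cQ$, the corresponding joint order $L_\cR$ is recovered by the rule
\[
r_{p, \tilde p} <_{L_\cR} r_{p', \tilde p'} \iff q_{p, \tilde p'} <_{L_\cQ} q_{p', \tilde p}.
\]
Since every admissible $L_\cQ$ arises from some $\mu$ in a chamber and the rule is precisely the pointwise comparison at $\mu$, it automatically yields a genuine linear order; the assembly requires at most $\binom{2^{K + \tilde K}}{2}$ lookups into the precomputed classical solution, which is efficient for the small values of $K + \tilde K$ relevant to DSGRN. The main obstacle I anticipate is the bookkeeping needed to verify that the reindexing $(p, \tilde p, p', \tilde p') \mapsto (p, \tilde p', p', \tilde p)$ is an involution that induces a bijection between the indexing sets of nontrivial joint and classical hyperplanes, so that no hyperplane is lost or introduced spuriously; this reduces to checking the three cases $p = p'$, $\tilde p = \tilde p'$, and $p \neq p',\, \tilde p \neq \tilde p'$, in each of which the two hyperplanes coincide as subsets of the positive orthant.
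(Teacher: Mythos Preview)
Your proposal is correct and follows essentially the same approach as the paper: both arguments hinge on the cross-multiplication identity $p/\tilde p < p'/\tilde p' \iff p\tilde p' < p'\tilde p$ for positive quantities, index both $\cR$ and $\cQ$ by $\cP \times \tilde\cP$, and use a common witness $\mu$ (your ``chamber'') to transfer an admissible order on one side to the other. Your chamber/arrangement language is simply a geometric rephrasing of the paper's witness argument; the only cosmetic difference is that the paper records the reindexing pair-by-pair along successive elements of a given order, whereas you observe directly that the two families of defining loci coincide globally.
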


\begin{proof}
Let $g = f \cdot \tilde{f}$ and observe that $g$ is an interaction function of order $K + \tilde{K}$ and interaction type $(k_1, \dotsc, k_q, \tilde{k}_1, \dotsc, \tilde{k}_{\tilde{q}})$.

Observe that this interaction type need not satisfy our previously stated convention that the summand sizes are in increasing order. For instance, it is possible that $\tilde{k}_1 < k_q$. However, this convention is simply a notational convenience since any interaction function is invariant under permutation of the summands.
In particular, the set of admissible linear orders for an interaction type depends only on the summand sizes and does not depend on their order in the vector defining the interaction type.

The PSD problem associated with $g$ is to determine all possible linear orders for a collection of $2^{K + \tilde{K}}$ polynomials denoted by
\[
\cQ \subset \rr \brak*{\ell_{n,1}, \dotsc, \ell_{n,K}, \delta_{n,1}, \dotsc, \delta_{n,K}, \tilde{\ell}_{n, 1}, \dotsc, \tilde{\ell}_{n, \tilde{K}}, \tilde{\delta}_{n, 1}, \dotsc, \tilde{\delta}_{n, \tilde{K}}}, 
\]
subject to the constraint that each of the $2(K + \tilde{K})$ indeterminates is positive. Observe that from the construction of $g$ and the definition of the classical PSD problem, each $q \in \cQ$ admits a unique factorization of the form $q = p \cdot \tilde{p}$ where
\begin{align*}
p \in \cP & := \setof*{f \paren*{z_1, \dotsc, z_{K}} \mid z_i \in \setof*{\ell_{n, i}, \ell_{n,i} + \delta_{n, i}}, \ 1 \leq i \leq K} \\
\tilde{p} \in \tilde{\cP} & := \setof*{\tilde{f} \paren*{z_1, \dotsc, z_{\tilde{K}}} \mid z_j \in \setof*{\tilde{\ell}_{n, j}, \tilde{\ell}_{n,j} + \tilde{\delta}_{n, j}}, \ 1 \leq j \leq \tilde{K}}.
\end{align*}
We note that the collection of polynomials defining $\cQ$ is closely related to the rational expressions appearing in the PSD problem associated to $(\tilde{f}, f)$. Specifically, the joint PSD problem associated to ($f, \tilde{f})$ is to determine all possible linear orders for the collection of $2^{K + \tilde{K}}$ rational functions defined over the same $2(K + \tilde{K})$ indeterminate parameters denoted by
\begin{equation}
\label{eq:rational_PSD}
\cR := \setof*{\frac{p}{\tilde{p}} \mid p_i \in \cP, \  \tilde{p}_j \in \tilde{\cP}} \subset \rr \brak*{\ell_{n,1}, \dotsc, \delta_{n,K}, \tilde{\ell}_{n, 1}, \dotsc, \tilde{\delta}_{n, \tilde{K}}}, 
\end{equation}
subject to the same positivity constraints on the parameters. 

In order to establish the bijection between linear orders of $\cQ$ and $\cR$ we note the following useful fact. Suppose $(\xi, \tilde{\xi}) \in (0, \infty)^{2K} \times (0, \infty)^{2\tilde{K}}$ where
\[
\xi := \paren*{\ell_{n,1}, \dotsc, \delta_{n,K}} \quad \text{and} \quad 
\tilde{\xi} := \paren*{\tilde{\ell}_{n, 1}, \dotsc, \tilde{\delta}_{n, \tilde{K}}}.
\]
Then for any $p_1, p_2 \in \cP$ and $\tilde{p}_1, \tilde{p}_2 \in \tilde{\cP}$ we have the equivalence,  
\begin{equation}
\label{eq:R_Q_inequality_equivalence}
\frac{p_1(\xi)}{\tilde{p}_1(\tilde{\xi})} < \frac{p_2(\xi)}{\tilde{p}_2(\tilde{\xi})} \quad \text{ if and only if }
\quad p_1(\xi) \cdot \tilde{p}_2(\tilde{\xi}) < \tilde{p}_1(\tilde{\xi}) \cdot p_2(\xi). 
\end{equation}
The proof of this claim is a trivial consequence of the fact that elements of $\cP, \tilde{\cP}$ have only positive coefficients due to Definition \ref{def:interaction_function} and all coordinates of $\xi, \tilde{\xi}$ are strictly positive. Therefore, the quantities $p_1(\xi), \tilde{p}_1(\tilde{\xi}), p_2(\xi), \tilde{p}_2(\tilde{\xi})$ are strictly positive for any choices of $\xi \in (0, \infty)^{2K}$ and $\tilde{\xi} \in (0, \infty)^{2\tilde{K}}$ and the equivalence claimed in \eqref{eq:R_Q_inequality_equivalence} follows.

To complete the proof we assume that $\cP$ and $\tilde{\cP}$ are indexed by unspecified but fixed indexing maps onto the integers $I := \setof*{0, \dotsc, 2^K-1}$ and $J := \setof*{0, \dotsc, 2^{\tilde{K}}-1}$ respectively. We use subscripts to denote these indices so that the sets $\cP$ and $\tilde{\cP}$ can be expressed as 
\[
\cP = \setof*{p_i \mid i \in I}, \qquad \tilde{\cP} = \setof*{\tilde{p}_j \mid j \in J}.
\]
These indices induce associated indexing maps on $\cQ$ and $\cR$ defined by 
\[
q_{ij} := p_i \cdot \tilde{p}_j \in \cQ, \quad r_{ij} := \frac{p_i}{\tilde{p}_j} \in \cR,
\]
for any $i \in I, j \in J$. Observe that with these indices for $\cQ$ and $\cR$ fixed, any linear order of $\cQ$ or $\cR$ can be uniquely identified with a permutation on $2^{K + \tilde{K}}$ symbols i.e.~an element of $S_{2^{K+\tilde{K}}}$. Consequently, the solution to the PSD problems associated with either $g$ or $(\tilde{f}, f)$ can be identified with subsets of $S_{2^{K+\tilde{K}}}$ denoted by $\cT_g$ and $ \cT_{(f,\tilde{f})}$ respectively. 

Let us consider $\sigma \in \cT_{g}$ which defines a linear order on $\cQ$ which we interpret as a function, $\sigma : \setof*{0, \dotsc, 2^{K + \tilde{K}}-1} \to I \times J$.
The assumption that $\sigma$ defines an admissible linear order implies there exists a witness 
\[
\mu = \paren*{\ell_{n,1}, \dotsc, \ell_{n,K}, \delta_{n,1}, \dotsc, \delta_{n,K}, \tilde{\ell}_{n, 1}, \dotsc, \tilde{\ell}_{n, \tilde{K}}, \tilde{\delta}_{n, 1}, \dotsc, \tilde{\delta}_{n, \tilde{K}}} \in (0, \infty)^{2(K + \tilde{K})}
\]
such that
\[
q_{\sigma(0)}(\mu) < q_{\sigma(1)}(\mu) < \dots <  q_{\sigma(2^{K + \tilde{K}}-1)}(\mu),
\]
or equivalently, 
\[
q_{\sigma(m-1)}(\mu) < q_{\sigma(m)}(\mu) \qquad \forall 1 \leq m \leq 2^{K+\tilde{K}}-1. 
\]
Fix an arbitrary $m \in \setof*{1, \dotsc, 2^{K+\tilde{K}}-1}$ and assume that $\sigma(m-1) = (i,j)$ and $\sigma(m) = (i', j')$. We split $\mu$ as 
\[
\mu = (\xi, \tilde{\xi}) \in (0, \infty)^{2K} \times (0, \infty)^{2\tilde{K}}
\] 
and recalling that $\cP, \tilde{\cP}$ are polynomials over disjoint parameters, we have
\begin{equation}
\label{eq:sigma_IJ_ineq}
q_{\sigma(m-1)}(\mu) = p_i(\xi) \cdot \tilde{p}_j(\tilde{\xi}) <  p_{i'}(\xi) \cdot \tilde{p}_{j'}(\tilde{\xi}) = q_{\sigma(m)}(\mu)
\end{equation}
and applying \eqref{eq:R_Q_inequality_equivalence} we conclude that
\begin{equation}
\label{eq:tau_IJ_ineq}
r_{i j'}(\mu) = \frac{p_{i}(\xi)}{\tilde{p}_{j'}(\tilde{\xi})} < \frac{p_{i'}(\xi)}{\tilde{p}_{j}(\tilde{\xi})} = r_{i' j}(\mu).
\end{equation}

Applying the same argument for each $1 \leq m \leq 2^{K + \tilde{K}} - 1$ yields a distinct linear order on the elements of $\cR$. Observe that this is not the same linear order as $\sigma$, but it is induced directly by $\sigma$ and the same $\mu$ is a witness for both. In other words, the existence of $\sigma \in \cT_g$ implies existence of a related linear order $\tau \in \cT_{(\tilde{f}, f)}$ such that 
\[
r_{\tau(0)}(\mu) < r_{\tau(1)}(\mu) < \dots <  r_{\tau(2^{K + \tilde{K}}-1)}(\mu),
\]
where $\mu$ is any witness for $\sigma$. It follows that $\# \cT_g \leq \# \cT_{(\tilde{f}, f)}$. However, a similar argument shows that any linear order $\tau \in \cT_{(\tilde{f}, f)}$ yields a distinct admissible linear order on $\cQ$ by applying the converse of \eqref{eq:R_Q_inequality_equivalence} to successive pairs of elements ordered by $\tau$ which completes the proof of the first claim. 

To prove the second claim, we simply observe that if $\cT_g$ has been computed and stored, then $\cT_{(\tilde{f}, f)}$ is recovered by a small number of trivial lookup operations. Specifically, for each $\sigma \in \cT_g$, the inverse indexing map for $\cQ$ which decomposes $q_{\sigma(m)}$ into factors as in Equation \eqref{eq:sigma_IJ_ineq} must be evaluated, followed by an evaluation of the indexing map for $r_{\tau(m)}$ as in Equation \eqref{eq:tau_IJ_ineq}. This must be done for each $1 \leq m \leq 2^{K + \tilde{K}}$, in order to completely construct $\tau \in \cT_{(\tilde{f}, f)}$ from a given linear order $\sigma \in \cT_g$. 
\end{proof}

Theorem~\ref{thm:type_I_bijection} demonstrates that solving the PSD problem for type I rate expressions is equivalent to solving the PSD problem for a related classical rate expression. In particular, for many network topologies of interest these classical PSD problems have already been solved and implemented in the current version of DSGRN implying that the implementation of type I rate expressions often requires only minor modifications to the existing DSGRN library. See Table~\ref{table:ubiquitination_logic} for a list of all interaction types available.

\begin{table}[!htbp]
\centering
\renewcommand{\arraystretch}{1.2}
\begin{tabular}{@{}lll@{}}
\toprule
\midrule
PTM interaction type & DSGRN interaction type & Interaction type \\
\midrule
$\langle x \rangle + y$ & $xy$ & $(1 ; 1)$ \\
\midrule
$\langle x \rangle + yz$ & $xyz$ & $(1 ; 1,1)$ \\
$\langle xy \rangle + z$ & $xyz$ & $(1,1 ; 1)$ \\
\midrule
$\langle x \rangle + y + z$ & $x(y+z)$ & $(1 ; 2)$ \\
$\langle y+z \rangle + x$ & $x(y+z)$ & $(2 ; 1)$ \\
\midrule
$\langle x \rangle + yzw$ & $xyzw$ & $(1 ; 1,1,1)$ \\
$\langle xy \rangle + zw$ & $xyzw$ & $(1,1 ; 1,1)$ \\
$\langle xyz \rangle + w$ & $xyzw$ & $(1,1,1 ; 1)$ \\
\midrule
$\langle x \rangle + y(z+w)$ & $xy(z+w)$ & $(1 ; 1,2)$ \\
$\langle xy \rangle + z+w$ & $xy(z+w)$ & $(1,1 ; 2)$ \\
$\langle z+w \rangle + xy$ & $xy(z+w)$ & $(2 ; 1,1)$ \\
$\langle x(z+w) \rangle + y$ & $xy(z+w)$ & $(1,2 ; 1)$ \\
\midrule
$\langle x+y \rangle + z+w$ & $(x+y)(z+w)$ & $(2 ; 2)$ \\
\midrule
$\langle x \rangle + y+z+w$ & $x (y+z+w)$ & $(1 ; 3)$ \\
$\langle y+z+w \rangle + x$ & $x (y+z+w)$ & $(3 ; 1)$ \\
\midrule
$\langle x \rangle + yzuw$ & $xyzuw$ & $(1 ; 1,1,1,1)$ \\
$\langle xy \rangle + zuw$ & $xyzuw$ & $(1,1 ; 1,1,1)$ \\
$\langle xyz \rangle + uw$ & $xyzuw$ & $(1,1,1 ; 1,1)$ \\
$\langle xyzu \rangle + w$ & $xyzuw$ & $(1,1,1,1 ; 1)$ \\
\midrule
$\langle x \rangle + yz(u + w)$ & $xyz(u + w)$ & $(1 ; 1,1,2)$ \\
$\langle xy \rangle + z(u + w)$ & $xyz(u + w)$ & $(1,1 ; 1,2)$ \\
$\langle xyz \rangle + u + w$ & $xyz(u + w)$ & $(1,1,1 ; 2)$ \\
$\langle u + w \rangle + xyz$ & $xyz(u + w)$ & $(2 ; 1,1,1)$ \\
$\langle x(u + w) \rangle + yz$ & $xyz(u + w)$ & $(1,2 ; 1,1)$ \\
$\langle xy(u + w) \rangle + z$ & $xyz(u + w)$ & $(1,1,2 ; 1)$ \\
\midrule
$\langle x \rangle + yzuvw$ & $xyzuvw$ & $(1 ; 1,1,1,1,1)$ \\
$\langle xy \rangle + zuvw$ & $xyzuvw$ & $(1,1 ; 1,1,1,1)$ \\
$\langle xyz \rangle + uvw$ & $xyzuvw$ & $(1,1,1 ; 1,1,1)$ \\
$\langle xyzu \rangle + vw$ & $xyzuvw$ & $(1,1,1,1 ; 1,1)$ \\
$\langle xyzuv \rangle + w$ & $xyzuvw$ & $(1,1,1,1,1 ; 1)$ \\
\bottomrule
\end{tabular}
\caption{Complete list of available type I interaction types involving type I and type 0 edges (first column). The size of the parameter factor graph for each of the interaction types is the same as the size of the corresponding original DSGRN interaction types listed (middle column).}
\label{table:ubiquitination_logic}
\end{table}

\subsection{Type II edges}
\label{sec:typeIIedges}

Type II edges are pairs of edges where one edge connects a node to the other edge
as shown in Figure~\ref{fig:RN+}(c)-(d), however they are implemented as pairs of edges from the source node to the target node. For example, the pair of type II edges in Figure~\ref{fig:phosphorylation_edges}(b) is implemented as the pair of edges from nodes 1 and 2 to node 3 as shown in Figure~\ref{fig:phosphorylation_edges}(c) and the type II edges in Figure~\ref{fig:phosphorylation_edges}(d) are implemented as depicted in Figure~\ref{fig:phosphorylation_edges}(e).

The mathematical expression governing the production of $x_n$ that includes type II edges is called a \emph{type II rate expression} and has the form
\begin{equation}
\label{eq:type_II_rate_expression}
-\gamma_n x_n + \Lambda_n(\tilde{w}_1, \dotsc, \tilde{w}_{\tilde{K}}, x_1, \dotsc, x_K)
\end{equation}
where $x_1, \dotsc, x_K$ represent nodes $1, \ldots, k$ connected to node $n$ by type $0$ edges and $\tilde{w}_1, \dotsc, \tilde{w}_{\tilde{K}}$ represent pairs of nodes connected to $n$ by type II edges, that is, $\tilde{w}_k = (\tilde{x}_{i_k}, \tilde{y}_{j_k})$, with $\tilde{x}_{i_k}, \tilde{y}_{j_k} \in \{ x_1, \ldots, x_N \}$, represents a pair of type II edges for $k = 1, \dotsc, \tilde{K}$. We assume that $\{ x_1, \ldots, x_K \}$, $\{ \tilde{x}_{i_1}, \ldots, \tilde{x}_{i_{\tilde{K}}} \}$, and $\{ \tilde{y}_{i_1}, \ldots, \tilde{y}_{i_{\tilde{K}}} \}$ are disjoint sets.

The production rate function $\Lambda_n$ is given in terms of an interaction function $f$ as defined in Definition~\ref{def:interaction_function} as follows.

In contrast to type 0 and I edges, to a pair of type II edges we associate a single $\ell$ and $\delta$. More precisely, if $(\tilde{x}_{i_k}, \tilde{y}_{j_k})$ represents a pair of type II edges terminating at the node $n$, to this pair we associate the positive parameters $\tilde{\ell}_{n, (i_k, j_k)}$, $\tilde{\delta}_{n, (i_k, j_k)}$. We still, however, associate one threshold per edge, that is, we associate $\tilde{\theta}_{n, i_k}$ and $\tilde{\theta}_{n, j_k}$ to the edges from $i_k$ and $j_k$ to $n$, respectively.

To simplify the presentation we denote
\[
[\tilde{x}_{i_k}, \tilde{y}_{j_k}] := \min \{ \lambda_{n, i_k}^{\pm}(\tilde{x}_{i_k}; \tilde{\ell}_{n, (i_k, j_k)}, \tilde{\delta}_{n, (i_k, j_k)}, \tilde{\theta}_{n, i_k}), \lambda_{n, j_k}^{\pm}(\tilde{y}_{j_k}; \tilde{\ell}_{n, (i_k, j_k)}, \tilde{\delta}_{n, (i_k, j_k)}, \tilde{\theta}_{n, j_k}) \}
\]
where $\lambda_{n, i_k}^{\pm}(\tilde{x}_{i_k}; \tilde{\ell}_{n, (i_k, j_k)}, \tilde{\delta}_{n, (i_k, j_k)}, \tilde{\theta}_{n, i_k})$ and $\lambda_{n, j_k}^{\pm}(\tilde{y}_{j_k}; \tilde{\ell}_{n, (i_k, j_k)}, \tilde{\delta}_{n, (i_k, j_k)}, \tilde{\theta}_{n, j_k})$ are regular step functions (see \eqref{eq:lambdaplusminus}) corresponding to the edges from $i_k$ and $j_k$ to $n$ respectively.
When convenient we also use the notation $ [\lambda_{n, i_k}^{\pm}(\tilde{x}_{i_k}), \lambda_{n, j_k}^{\pm}(\tilde{y}_{j_k})] := [\tilde{x}_{i_k}, \tilde{y}_{j_k}]$.

Note that we included the parameters to emphasize that $\tilde{\ell}$ and $\tilde{\delta}$ are the same between the functions $\lambda_{n, i_k}^{\pm}$ and $\lambda_{n, j_k}^{\pm}$. As a consequence, both have values in the set $\{ \tilde{\ell}_{n, (i_k, j_k)}, \tilde{\ell}_{n, (i_k, j_k)} + \tilde{\delta}_{n, (i_k, j_k)} \}$, and therefore
\begin{equation}
\label{eq:tilde}
[\tilde{x}_{i_k}, \tilde{y}_{j_k}] =
\begin{cases}
\tilde{\ell}_{n, (i_k, j_k)}, & \text{if}~~ \lambda_{n, i_k}^{\pm}(\tilde{x}_{i_k}) = \tilde{\ell}_{n, (i_k, j_k)}
~\text{or}~ \lambda_{n, j_k}^{\pm}(\tilde{y}_{j_k}) = \tilde{\ell}_{n, (i_k, j_k)} \\
\tilde{\ell}_{n, (i_k, j_k)} + \tilde{\delta}_{n, (i_k, j_k)}, & \text{otherwise}.
\end{cases}
\end{equation}
Therefore $[\tilde{x}_{i_k}, \tilde{y}_{j_k}]$ acts like an \textsf{AND} operation on the values of $\lambda_{n, i_k}^{\pm}$ and $\lambda_{n, j_k}^{\pm}$, that is, $[\tilde{x}_{i_k}, \tilde{y}_{j_k}]$ is high if and only if both of those values are high.

The signs are determined by the type of edges as follows. Let $(\tilde{x}_{i_k}, \tilde{y}_{j_k})$ represent a pair of type II edges. If the pair of edges interact as in Figure~\ref{fig:RN+}(c) denoted by solid dot then
\begin{equation}
\label{eq:fulldot}
\lambda_{n, i_k}^{\pm}(\tilde{x}_{i_k}) =
\begin{cases}
\lambda_{n, i_k}^{+}(\tilde{x}_{i_k}), & \text{if}~ i_k \to n \\
\lambda_{n, i_k}^{-}(\tilde{x}_{i_k}), & \text{if}~ i_k \dashv n
\end{cases}
\quad \text{and} \quad
\lambda_{n, j_k}^{\pm}(\tilde{y}_{j_k}) =
\begin{cases}
\lambda_{n, j_k}^{+}(\tilde{y}_{j_k}), & \text{if}~ j_k \to n \\
\lambda_{n, j_k}^{-}(\tilde{y}_{j_k}), & \text{if}~ j_k \dashv n .
\end{cases}
\end{equation}
If the pair of edges interact as in Figure~\ref{fig:RN+}(d) denoted by an open dot then the sign of the second function is flipped, that is,
\begin{equation}
\label{eq:emptydot}
\lambda_{n, i_k}^{\pm}(\tilde{x}_{i_k}) =
\begin{cases}
\lambda_{n, i_k}^{+}(\tilde{x}_{i_k}), & \text{if}~ i_k \to n \\
\lambda_{n, i_k}^{-}(\tilde{x}_{i_k}), & \text{if}~ i_k \dashv n
\end{cases}
\quad \text{and} \quad
\lambda_{n, j_k}^{\pm}(\tilde{y}_{j_k}) =
\begin{cases}
\lambda_{n, j_k}^{-}(\tilde{y}_{j_k}), & \text{if}~ j_k \to n \\
\lambda_{n, j_k}^{+}(\tilde{y}_{j_k}), & \text{if}~ j_k \dashv n .
\end{cases}
\end{equation}

Let $f(\tilde{z}_1, \dotsc, \tilde{z}_{\tilde{K}}, z_1, \dotsc, z_K)$ be an interaction function of type $(k_1, \dotsc, k_q)$ and define
\begin{equation}
\label{eq:type_II_expression}
\Lambda_n(\tilde{w}_1, \dotsc, \tilde{w}_{\tilde{K}}, x_1, \dotsc, x_K) =
f([\tilde{x}_{i_1}, \tilde{y}_{j_1}], \dotsc, [\tilde{x}_{i_{\tilde{K}}}, \tilde{y}_{j_{\tilde{K}}}], \lambda_{n, 1}^{\pm}(x_1), \dotsc, \lambda_{n, K}^{\pm}(x_K)).
\end{equation}

Since $\Lambda_n$ is given in terms of a classical interaction function of type $(k_1, \dotsc, k_q)$, solving the PSD problem for type II rate expressions reduces to solving the PSD problem corresponding to this classical interaction function.

Before presenting the general result, let us consider some examples. Consider the pair of type II edges from nodes 1 and 2 terminating on node 3 in Figure~\ref{fig:phosphorylation_edges}(b). The rate expression for  node $3$ is the result of an \textsf{AND} operation on the incoming values of nodes $1$ and $2$, that is, the incoming value to node $3$ is high if and only if the incoming values from nodes $1$ and $2$ are both high. More precisely
\begin{equation}
\label{eq:and}
\Lambda_3(x_1, x_2) = [\lambda_{3,1}^+(x_1),\lambda_{3,2}^+(x_2)] =
\begin{cases}
\ell_{3,(1,2)} & \text{if}~ x_1 < \theta_{3,1} ~\text{or}~ x_2 < \theta_{3,2} \\
\ell_{3,(1,2)} + \delta_{3,(1,2)} & \text{otherwise}
\end{cases}
\end{equation}
where $\ell_{3,(1,2)}$ and $\delta_{3,(1,2)}$ are the parameters corresponding to the pair $(x_1, x_2)$.
Note that while the network in Figure~\ref{fig:phosphorylation_edges}(b) is asymmetric, the net effect on the rate of change of node 2 is symmetric, as expressed in Figure~\ref{fig:phosphorylation_edges}(c).

The factor graph of a node having one or more pairs of type II edges as input is computed from a partial order on the input polynomials computed directly from the original DSGRN total orders. Consider for example, the network in Figure~\ref{fig:phosphorylation_edges}(d). The inputs to node $4$ are nodes $1$, $2$, and $3$ the rate expression for node $4$ is given by
\begin{align*}
& \Lambda_4(x_1, x_2, x_3) =
[\lambda_{4,1}^+(x_1),\lambda_{4,2}^+(x_2)] \lambda_{4,3}^-(x_3) = \\
& \left(
\begin{cases}
\ell_{4,(1,2)} & \text{if}~ x_1 < \theta_{4,1} ~\text{or}~ x_2 < \theta_{4,2} \\
\ell_{4,(1,2)} + \delta_{4,(1,2)} & \text{otherwise}
\end{cases}
\right)
\left(
\begin{cases}
\ell_{4,3} + \delta_{4,3} & \text{if}~ x_3 < \theta_{4,3} \\
\ell_{4,3} & \text{if}~ x_3 > \theta_{4,3}
\end{cases}
\right)
\end{align*}
which takes the following values
\[
\setof*{
\ell_{4,(1,2)} \ell_{4,3}, \, (\ell_{4,(1,2)} + \delta_{4,(1,2)}) \ell_{4,3}, \,
\ell_{4,(1,2)} (\ell_{4,3} + \delta_{4,3}), \,
(\ell_{4,(1,2)} + \delta_{4,(1,2)}) (\ell_{4,3} + \delta_{4,3})
}.
\]
Notice that these are the values of an interaction function of type $(1, 1)$ for the original DSGRN for which the PSD problem have been solved.
In particular, define the polynomials 
\begin{align*}
p_0 := & \ell_{4,(1,2)} \ell_{4,3} &
p_1 := & (\ell_{4,(1,2)} + \delta_{4,(1,2)}) \ell_{4,3} \\
p_2 := & \ell_{4,(1,2)} (\ell_{4,3} + \delta_{4,3}) &
p_3 := & (\ell_{4,(1,2)} + \delta_{4,(1,2)}) (\ell_{4,3} + \delta_{4,3})    
\end{align*}
then for an arbitrary parameter $\mu = (\ell_{4, (1,2)}, \delta_{4, (1,2)}, \ell_{4,3}, \delta_{4,3}) \in (0, \infty)^4$ one of the following orders must be satisfied
\begin{equation}
\label{eq:linear_orders_p}
p_0(\mu) < p_1(\mu) < p_2(\mu) < p_3(\mu) \quad \text{or} \quad p_0(\mu) < p_2(\mu) < p_1(\mu) < p_3(\mu).
\end{equation}
Hence, the admissible linear orders are given by 
\[
(0, 1, 2, 3) \quad \text{and} \quad (0,2,1,3)
\]
which is identical to the solution of the classical PSD problem for interaction type $(1,1)$.

Even though the choice of identical parameters in $\lambda_{4,1}^+$ and $\lambda_{4,2}^+$ reduces the number of values of $\Lambda_4(x_1, x_2, x_4)$ from 8 to 4, in order to compute the state transition graph (STG) we need to be able to determine in which domains these values are attained. 
In other words, in order to construct the STG we need to be able to determine the sign of \eqref{eq:type_II_rate_expression} for the parameter combinations corresponding to all phase space domains. 
Therefore we need keep track of all combinations of the values of $\lambda_{4,1}^+(x_1)$, $\lambda_{4,2}^+(x_2)$, and $\lambda_{4,3}^-(x_3)$ and for each determine the value of $\Lambda_4(x_1, x_2, x_3)$. 
We refer to these combinations as the \emph{input polynomials} corresponding to $\Lambda_4(x_1, x_2, x_3)$. 
Since $[\lambda_{4,1}^+(x_1), \lambda_{4,2}^+(x_2)] = \min \{ \lambda_{4,1}^+(x_1), \lambda_{4,2}^+(x_2) \}$, the input polynomials corresponding to $\Lambda_4(x_1, x_2, x_3)$ are

\begin{align*}
q_0 & := [\ell_{4,(1,2)}, \ell_{4,(1,2)}] \ell_{4,3} = \ell_{4,(1,2)} \ell_{4,3} \\
q_1 & := [\ell_{4,(1,2)} + \delta_{4,(1,2)}, \ell_{4,(1,2)}] \ell_{4,3} = \ell_{4,(1,2)} \ell_{4,3} \\
q_2 & := [\ell_{4,(1,2)}, \ell_{4,(1,2)} + \delta_{4,(1,2)}] \ell_{4,3} = \ell_{4,(1,2)} \ell_{4,3} \\
q_3 & := [\ell_{4,(1,2)} + \delta_{4,(1,2)}, \ell_{4,(1,2)} + \delta_{4,(1,2)}] \ell_{4,3} = (\ell_{4,(1,2)} + \delta_{4,(1,2)}) \ell_{4,3} \\
q_4 & := [\ell_{4,(1,2)}, \ell_{4,(1,2)}] (\ell_{4,3} + \delta_{4,3}) = \ell_{4,(1,2)} (\ell_{4,3} + \delta_{4,3}) \\
q_5 & := [\ell_{4,(1,2)} + \delta_{4,(1,2)}, \ell_{4,(1,2)}] (\ell_{4,3} + \delta_{4,3}) = \ell_{4,(1,2)} (\ell_{4,3} + \delta_{4,3}) \\
q_6 & := [\ell_{4,(1,2)}, \ell_{4,(1,2)} + \delta_{4,(1,2)}] (\ell_{4,3} + \delta_{4,3}) = \ell_{4,(1,2)} (\ell_{4,3} + \delta_{4,3}) \\
q_7 & := [\ell_{4,(1,2)} + \delta_{4,(1,2)}, \ell_{4,(1,2)} + \delta_{4,(1,2)}] (\ell_{4,3} + \delta_{4,3}) = (\ell_{4,(1,2)} + \delta_{4,(1,2)}) (\ell_{4,3} + \delta_{4,3}).
\end{align*}

Since these polynomials only attain the 4 values $p_0,p_1,p_2, p_3$, from the linear orders \eqref{eq:linear_orders_p} we get the following list of partial orders
\[
\{q_0, q_1, q_2\} < q_3 < \{q_4, q_5, q_6\} < q_7 \quad \text{and} \quad
\{q_0, q_1, q_2\} < \{q_4, q_5, q_6\} < q_3 < q_7.
\]
From these partial orders we can compute the factor graph of node $4$ as before by considering all possible values of the thresholds $\setof{\gamma_4 \theta_{j, 4}}$ relative to each of the these partial orders, with the added restriction that valid configurations cannot have thresholds  between $q_0, q_1, q_2$ or between $q_4, q_5, q_6$. Observe that the parameter decomposition is symmetric with respect to the input edges of the type II pair of edges.

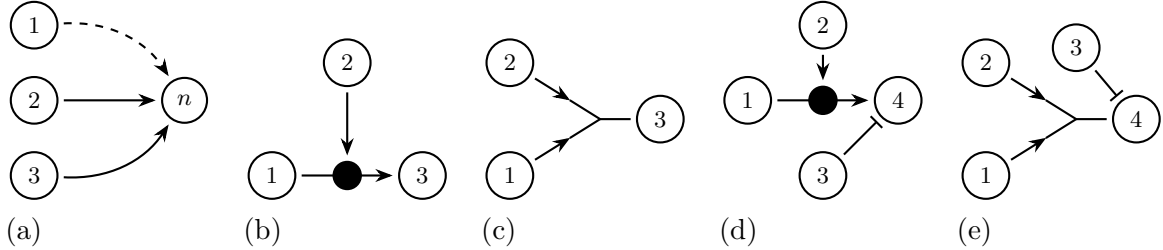
\begin{figure}[!htbp]
\begin{picture}(400,90)
\put(2,0){(a)}
\put(0,15){
\begin{tikzpicture}[thick, main node/.style={circle, fill=white, draw, font=\footnotesize}, scale=1.0]

\node[main node] (1) at (0,0) {$3$};
\node[main node] (2) at (0,1) {$2$};
\node[main node] (3) at (0,2) {$1$};
\node[main node] (4) at (2,1) {$n$};

\draw[-{Stealth}, shorten <= 2pt, shorten >= 2pt] (1) to [bend right] (4);
\draw[-{Stealth}, shorten <= 2pt, shorten >= 2pt] (2) -- (4);
\draw[-{Stealth}, dashed, shorten <= 2pt, shorten >= 2pt] (3) to [bend left] (4);
\end{tikzpicture}
}
\put(92,0) {(b)}
\put(90,15){
\begin{tikzpicture}[thick, main node/.style={circle, fill=white, draw, font=\footnotesize}, mid+ node/.style={circle, fill=black, draw}, scale=1.0]

\node[main node] (1) at (0,0) {1};
\node[main node] (2) at (1,1.5) {2};
\node[main node] (3) at (2,0) {3};
\node[mid+ node] (4) at (1,0) {};

\draw[-{Stealth}, shorten <= 2pt, shorten >= 2pt] (1) -- (3);
\draw[-{Stealth}, shorten <= 2pt, shorten >= 2pt] (2) -- (4);
\end{tikzpicture}
}
\put(182,0){(c)}
\put(180,15){
\begin{tikzpicture}[thick, main node/.style={circle, fill=white, draw, font=\footnotesize}, fake node/.style={circle, inner sep=0pt}, scale=1.0]

\node[main node] (1) at (0,0) {1};
\node[main node] (2) at (0,1.5) {2};
\node[main node] (3) at (2,0.75) {3};
\node[fake node] (4) at (0.8,0.5) {};
\node[fake node] (5) at (0.8,1) {};
\node[fake node] (6) at (1.2,0.75) {};

\draw[-{Stealth}, shorten <= 2pt, shorten >= -1pt] (1) -- (4);
\draw[-{Stealth}, shorten <= 2pt, shorten >= -1pt] (2) -- (5);
\draw[-, shorten <= -1pt, shorten >= -1pt] (4) -- (6);
\draw[-, shorten <= -1pt, shorten >= -1pt] (5) -- (6);
\draw[-, shorten <= -1pt, shorten >= 2pt] (6) -- (3);
\end{tikzpicture}
}
\put(272,0){(d)}
\put(270,15){
\begin{tikzpicture}[thick, main node/.style={circle, fill=white, draw, font=\footnotesize}, mid+ node/.style={circle, fill=black, draw}, scale=1.0]

\node[main node] (1) at (0,1) {1};
\node[main node] (2) at (1,2) {2};
\node[main node] (3) at (1,0) {3};
\node[main node] (4) at (2,1) {4};
\node[mid+ node] (5) at (1,1) {};

\draw[-{Stealth}, shorten <= 2pt, shorten >= 2pt] (1) -- (4);
\draw[-|, shorten <= 2pt, shorten >= 2pt] (3) -- (4);
\draw[-{Stealth}, shorten <= 2pt, shorten >= 2pt] (2) -- (5);
\end{tikzpicture}
}
\put(362,0){(e)}
\put(360,15){
\begin{tikzpicture}[thick, main node/.style={circle, fill=white, draw, font=\footnotesize}, fake node/.style={circle, inner sep=0pt}, scale=1.0]

\node[main node] (1) at (0,0) {1};
\node[main node] (2) at (0,1.5) {2};
\node[main node] (3) at (1.2,1.7) {3};
\node[main node] (4) at (2,0.75) {4};
\node[fake node] (5) at (0.8,0.5) {};
\node[fake node] (6) at (0.8,1) {};
\node[fake node] (7) at (1.2,0.75) {};

\draw[-{Stealth}, shorten <= 2pt, shorten >= -1pt] (1) -- (5);
\draw[-{Stealth}, shorten <= 2pt, shorten >= -1pt] (2) -- (6);
\draw[-, shorten <= -1pt, shorten >= -1pt] (5) -- (7);
\draw[-, shorten <= -1pt, shorten >= -1pt] (6) -- (7);
\draw[-, shorten <= -1pt, shorten >= 2pt] (7) -- (4);
\draw[-|, shorten <= 2pt, shorten >= 2pt] (3) -- (4);
\end{tikzpicture}
}
\end{picture}
\caption{(a) Example of type I regulation; (b) Type II edges terminating on node $3$; (c) Implementation of interaction between type II edges starting from nodes $1$ and $2$ to node $3$; (d) More complex interaction between type II and type 0 inputs to node $4$; (e) Implementation of (d).}
\label{fig:phosphorylation_edges}
\end{figure}

We now present the general result for type II edges. The proof is a simple generalization of the ideas presented in the above example and it is left to the reader.

\begin{thm}
\label{thm:type_II_PSD}
Suppose node $n$ is governed by a type II rate expression defined by an interaction function $f$ of order $K$ and type $(k_1, \dotsc, k_q)$ as defined in equation \eqref{eq:type_II_expression}. Then, the PSD problem associated to $\Lambda_n$ is equivalent to the classical PSD problem of type $(k_1, \dotsc, k_q)$ associated to the interaction function $f$.
\end{thm}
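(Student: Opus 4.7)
The plan is to show that, from the perspective of the PSD problem (which only cares about the finite set of values attained by $\Lambda_n$ together with the positivity constraints on parameters), each bracket expression $[\tilde{x}_{i_k}, \tilde{y}_{j_k}]$ behaves exactly like a single classical step function with parameters $\tilde{\ell}_{n,(i_k,j_k)}$ and $\tilde{\delta}_{n,(i_k,j_k)}$. Once this reduction is made, the type II PSD problem becomes, verbatim, a classical PSD problem for the same interaction function $f$.

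First, I would introduce the relabeling $\tilde{\ell}'_k := \tilde{\ell}_{n,(i_k,j_k)}$ and $\tilde{\delta}'_k := \tilde{\delta}_{n,(i_k,j_k)}$ for $1 \leq k \leq \tilde{K}$. By \eqref{eq:tilde}, the bracket $[\tilde{x}_{i_k}, \tilde{y}_{j_k}]$ is a simple function of the state that takes the two values $\tilde{\ell}'_k$ and $\tilde{\ell}'_k + \tilde{\delta}'_k$, just as a classical step function $\lambda^\pm$ does. Consequently the image of $\Lambda_n$ is contained in
\[
\cP' := \setof*{f(\tilde z_1,\ldots,\tilde z_{\tilde K}, z_1,\ldots, z_K) \,\big|\, \tilde z_k\in\setof*{\tilde\ell'_k,\tilde\ell'_k+\tilde\delta'_k},\ z_j\in\setof*{\ell_{n,j},\ell_{n,j}+\delta_{n,j}}},
\]
which is precisely the polynomial set whose linear orders define the classical PSD problem for $f$ of interaction type $(k_1,\ldots,k_q)$.

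Next, I would verify that every element of $\cP'$ is actually attained, so that the type II PSD problem really is about linear orders on all of $\cP'$ and not a proper subset. Here I would use the standing assumption that the state variables $\{x_1,\ldots,x_K\}$, $\{\tilde x_{i_1},\ldots,\tilde x_{i_{\tilde K}}\}$, and $\{\tilde y_{j_1},\ldots,\tilde y_{j_{\tilde K}}\}$ are pairwise disjoint. Disjointness means the values of the classical step functions $\lambda^\pm_{n,j}(x_j)$ and of each bracket $[\tilde x_{i_k},\tilde y_{j_k}]$ may be selected independently by varying the corresponding state coordinates. For an individual bracket, choosing both $\tilde x_{i_k}$ and $\tilde y_{j_k}$ on the ``high'' side of their thresholds realizes $\tilde\ell'_k+\tilde\delta'_k$ (with the appropriate sign conventions of \eqref{eq:fulldot}--\eqref{eq:emptydot}), while driving either state to the ``low'' side realizes $\tilde\ell'_k$. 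Hence all $2^{K+\tilde K}$ sign combinations appear in the image of $\Lambda_n$.

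Finally, I would observe that the PSD problem now reduces, literally by renaming $\tilde\ell'_k,\tilde\delta'_k$ to fresh $\ell_{n,K+k},\delta_{n,K+k}$, to the classical PSD problem for the interaction function $f$ with order $K+\tilde K$ and type $(k_1,\ldots,k_q)$, subject to the same positivity constraints on the $2(K+\tilde K)$ parameters. Any admissible linear order on $\cP'$ witnessed by a classical parameter vector lifts to an admissible order for the type II problem (using the preceding realizability argument), and conversely any witness for the type II problem is a valid parameter vector for the classical problem after the renaming. The main conceptual issue, and the one to flag in the write‑up, is exactly this realizability step: the bracket collapses pairs of step functions into a single two‑valued function, so one must confirm that treating a pair of type II edges as a single ``virtual'' input does not forbid any of the value combinations that a genuine single input would allow. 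Once disjointness of the underlying state variables is invoked, the rest is a direct application of Definition \ref{def:interaction_function} and the definition of the classical PSD.
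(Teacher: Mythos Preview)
Your proposal is correct and follows precisely the approach the paper intends: the paper does not give a standalone proof but simply states that ``the proof is a simple generalization of the ideas presented in the above example and it is left to the reader,'' and your argument---reducing each bracket $[\tilde x_{i_k},\tilde y_{j_k}]$ to a single two-valued input with parameters $\tilde\ell_{n,(i_k,j_k)},\tilde\delta_{n,(i_k,j_k)}$ and then identifying the resulting polynomial set with the classical $\cP$ for $f$---is exactly that generalization. Your explicit attention to the realizability of all $2^{K+\tilde K}$ value combinations via disjointness of the state variables is a point the paper only illustrates by example, so your write-up is in fact more complete than what appears in the text.
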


Observe  that the parameter decomposition and the DSGRN computations are  symmetric with respect to the input edges to a type II pair of edges. For this reason internally DSGRN represents pairs of type II edges as in Figure~\ref{fig:phosphorylation_edges}(c) and (e).

For type II edges we refer to the polynomial expression of $f$ as the \textit{interaction type of a type II rate expression}. In addition, we enclose the pairs of type II edges in between square brackets $[ \, \cdot \, ]$ to denote the pair of edges used to define the rate expression \eqref{eq:type_II_expression}.
Notice that the notation $[x, y]$ is symmetric for a pair of type II edges $x$ and $y$.
As an example, the interaction type of the incoming edges to node 4 in Figure~\ref{fig:phosphorylation_edges}(e) is denoted by
\[
[x_1, x_2] x_3.
\]

Observe that the linear orders for this structure are the same as the linear orders \eqref{eq:linear_orders_p} for the regular DSGRN interaction type
\[
z_1 z_2.
\]

By \eqref{eq:fulldot} and \eqref{eq:emptydot}, the only effect of an  open dot interaction, compared to full dot interaction, between a pair of type II edges is to change the sign of the second edge, that is, it changes the second edge from $i \to j$ to $i \dashv j$ or from $i \dashv j$ to $i \to j$. In DSGRN this change of sign is denoted by
\[
[x_1, \sim \! x_2].
\]

We can compute  a complete list of interaction types involving type II edges supported by DSGRN by the following procedure: Take any interaction type from \cite[Table 1]{kepley:mischaikow:zhang} or from Table~\ref{table:ubiquitination_logic} (which in turn is derived from \cite[Table 1]{kepley:mischaikow:zhang}) and replace one or more nodes by a pair of nodes representing type II edges (see also Section~\ref{sec:types_I_II}). The size of the factor graph thus generated is the same as the size of the factor graphs of the corresponding interaction type in the original DSGRN.

For example, the original DSGRN interaction type $x y$ can be used to generate the type I interaction type
\[
\langle x \rangle + y
\]
and from these we can derive the following interaction types involving type II edges
\[
[x_1, x_2] y, \quad [x_1, x_2] [y_1, y_2], \quad
\langle [x_1, x_2] \rangle + y, \quad \langle x \rangle + [y_1, y_2], \quad \langle [x_1, x_2] \rangle + [y_1, y_2],
\]
all of which have factor graphs of same size as $x y$.

Table~\ref{table:phosphorylation_logic} presents a complete list of the interaction types derived from $x(y+z)$ as well as some additional examples of interaction types involving type I and type II edges supported by DSGRN.

\subsection{Combination of type I and type II  edges}
\label{sec:types_I_II}

As indicated in the introduction, type I and II  edges can be combined. In particular, a pair of type II edges can affect the decay of a node. As a result in a rate expression of type I we can replace one or more of the type I edges by pairs of type II edges. To solve a PSD problem and hence find the linear orders for this type of expression, we first get the linear orders for the corresponding type I expression and then apply the procedure described in Section~\ref{sec:typeIIedges} that computes  the partial orders for the type II edges.
Consider, for example, the following interaction type involving type I expression
\[
\langle u + z \rangle + v w.
\]

We can apply the procedure described in Section~\ref{sec:typeIIedges} to the linear orders of this interaction type to get the partial orders of the interaction type where we replace the edge $u$ by the type II pair $[x, y]$ to get the partial orders for the interaction type
\[
\langle [x, y] + z \rangle + v w.
\]

In a similar way we  can obtain the linear orders for interaction types involving all three types of edges such as
\[
\langle [x, y] + z \rangle + u [v, w].
\]

\begin{table}[!htbp]
\centering
\renewcommand{\arraystretch}{1.2}
\begin{tabular}{@{}ll@{}}
\toprule
\midrule
PTM interaction type & DSGRN interaction type \\
\midrule
$[x_1, x_2](y+z)$ & $x (y + z)$ \\
$x([y_1, y_2]+z)$ & $x (y + z)$ \\
$[x_1, x_2]([y_1, y_2]+z)$ & $x (y + z)$ \\
$x([y_1, y_2]+[z_1, z_2])$ & $x (y + z)$ \\
$[x_1, x_2] ([y_1, y_2]+[z_1, z_2])$ & $x (y + z)$ \\
$\langle x \rangle + ([y_1, y_2]+z)$ & $x (y + z)$ \\
$\langle x \rangle + ([y_1, y_2]+[z_1, z_2])$ & $x (y + z)$ \\
$\langle [x_1, x_2] \rangle + (y+z)$ & $x (y + z)$ \\
$\langle [x_1, x_2] \rangle + ([y_1, y_2]+z)$ & $x (y + z)$ \\
$\langle [x_1, x_2] \rangle + ([y_1, y_2]+[z_1, z_2])$ & $x (y + z)$ \\
$\langle y+z \rangle + [x_1, x_2]$ & $x (y + z)$ \\
$\langle [y_1, y_2]+z \rangle + x$ & $x (y + z)$ \\
$\langle [y_1, y_2]+z \rangle + [x_1, x_2]$ & $x (y + z)$ \\
$\langle [y_1, y_2]+[z_1, z_2] \rangle + x$ & $x (y + z)$ \\
$\langle [y_1, y_2]+[z_1, z_2] \rangle + [x_1, x_2]$ & $x (y + z)$ \\
\midrule
$[x, y] + z$ & $x + z$ \\
$[x, y]  z$ & $x  z$ \\
$\langle x \rangle + [y, z]$ & $x y$ \\
$\langle x \rangle + [y, z] w$ & $xyw$ \\
$\langle x \rangle + [y, z] u w$ & $xyuw$ \\
$\langle x \rangle + [y, z] [u, v] w$ & $xyuw$ \\
$x y([z, w]+u)$ & $xy(z+u)$ \\
$\langle xy \rangle + [z,w] + u$ & $xy(z+u)$ \\
$\langle xy \rangle + [z,w] + [u, v]$ & $xy(z+u)$ \\
\bottomrule
\end{tabular}
\caption{Complete list of the interaction types derived from $x(y+z)$ (top) and some additional examples of interaction types with type I and type II edges supported by DSGRN. The sizes of the factor graphs for each of the interaction types are the same as the size of the corresponding original DSGRN interaction type listed.}
\label{table:phosphorylation_logic}
\end{table}

\section{Computational Examples}
\label{sec:examples}

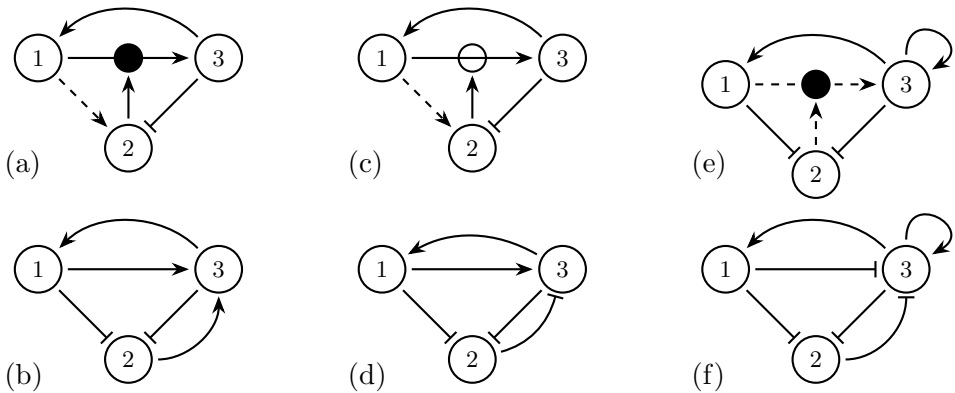
\begin{figure}
\centering
\begin{picture}(400,160)
\put(0,90){(a)}
\put(0,90){
\begin{tikzpicture}[thick, main node/.style={circle, fill=white, draw, font=\footnotesize}, mid+ node/.style={circle, fill=black, draw}, fake node/.style={circle}, scale=1.2]

\node[main node] (1) at (0,1) {1};
\node[main node] (3) at (2,1) {3};
\node[main node] (2) at (1,0) {2};
\node[mid+ node] (5) at (1,1) {};

\draw[-{Stealth}, shorten <= 2pt, shorten >= 2pt] (1) -- (3);
\draw[-{Stealth}, shorten <= 2pt, shorten >= 1pt] (2) -- (5);
\draw[-{Stealth}, dashed, shorten <= 2pt, shorten >= 2pt] (1) -- (2);
\draw[-|, shorten <= 2pt, shorten >= 2pt] (3) -- (2);
\draw[-{Stealth}, shorten <= 2pt, shorten >= 2pt] (3) to [out=135, in=45, looseness=1] (1);
\end{tikzpicture}
}
\put(0,10){(b)}
\put(0,10){
\begin{tikzpicture}[thick, main node/.style={circle, fill=white, draw, font=\footnotesize}, mid+ node/.style={circle, fill=black, draw}, fake node/.style={circle}, scale=1.2]

\node[main node] (1) at (0,1) {1};
\node[main node] (3) at (2,1) {3};
\node[main node] (2) at (1,0) {2};

\draw[-{Stealth}, shorten <= 2pt, shorten >= 2pt] (1) -- (3);
\draw[-|, shorten <= 2pt, shorten >= 2pt] (1) -- (2);
\draw[-|, shorten <= 2pt, shorten >= 2pt] (3) -- (2);
\draw[-{Stealth}, shorten <= 2pt, shorten >= 1pt] (2) to [out=0, in=-90, looseness=1] (3);
\draw[-{Stealth}, shorten <= 2pt, shorten >= 2pt] (3) to [out=135, in=45, looseness=1] (1);
\end{tikzpicture}
}
\put(130,90){(c)}
\put(130,90){
\begin{tikzpicture}[thick, main node/.style={circle, fill=white, draw, font=\footnotesize}, mid+ node/.style={circle, fill=black, draw}, fake node/.style={circle}, scale=1.2]

\node[main node] (1) at (0,1) {1};
\node[main node] (3) at (2,1) {3};
\node[main node] (2) at (1,0) {2};
\node[main node] (5) at (1,1) {};

\draw[-{Stealth}, shorten <= 2pt, shorten >= 2pt] (1) -- (3);
\draw[-{Stealth}, shorten <= 2pt, shorten >= 1pt] (2) -- (5);
\draw[-{Stealth}, dashed, shorten <= 2pt, shorten >= 2pt] (1) -- (2);
\draw[-|, shorten <= 2pt, shorten >= 2pt] (3) -- (2);
\draw[-{Stealth}, shorten <= 2pt, shorten >= 2pt] (3) to [out=135, in=45, looseness=1] (1);
\end{tikzpicture}
}
\put(130,10){(d)}
\put(130,10){
\begin{tikzpicture}[thick, main node/.style={circle, fill=white, draw, font=\footnotesize}, mid+ node/.style={circle, fill=black, draw}, fake node/.style={circle}, scale=1.2]

\node[main node] (1) at (0,1) {1};
\node[main node] (3) at (2,1) {3};
\node[main node] (2) at (1,0) {2};

\draw[-{Stealth}, shorten <= 2pt, shorten >= 2pt] (1) -- (3);
\draw[-|, shorten <= 2pt, shorten >= 2pt] (1) -- (2);
\draw[-|, shorten <= 2pt, shorten >= 2pt] (3) -- (2);
\draw[-|, shorten <= 2pt, shorten >= 1pt] (2) to [bend right] (3);
\draw[-{Stealth}, shorten <= 2pt, shorten >= 2pt] (3) to [bend right] (1);
\end{tikzpicture}
}
\put(260,90){(e)}
\put(260,80){
\begin{tikzpicture}[thick, main node/.style={circle, fill=white, draw, font=\footnotesize}, mid+ node/.style={circle, fill=black, draw}, fake node/.style={circle}, scale=1.2]

\node[main node] (1) at (0,1) {1};
\node[main node] (3) at (2,1) {3};
\node[main node] (2) at (1,0) {2};
\node[mid+ node] (5) at (1,1) {};

\draw[-{Stealth}, dashed, shorten <= 2pt, shorten >= 2pt] (1) -- (3);
\draw[-{Stealth}, dashed, shorten <= 2pt, shorten >= 1pt] (2) -- (5);
\draw[-|, shorten <= 2pt, shorten >= 2pt] (1) -- (2);
\draw[-|, shorten <= 2pt, shorten >= 2pt] (3) -- (2);
\draw[-{Stealth}, shorten <= 2pt, shorten >= 2pt] (3) to [out=135, in=45, looseness=1] (1);
\draw[-{Stealth}, shorten <= 2pt, shorten >= 2pt] (3) to [out=90, in=30, looseness=6] (3);
\end{tikzpicture}
}
\put(260,10){(f)}
\put(260,10){
\begin{tikzpicture}[thick, main node/.style={circle, fill=white, draw, font=\footnotesize}, mid+ node/.style={circle, fill=black, draw}, fake node/.style={circle}, scale=1.2]

\node[main node] (1) at (0,1) {1};
\node[main node] (3) at (2,1) {3};
\node[main node] (2) at (1,0) {2};

\draw[-|, shorten <= 2pt, shorten >= 2pt] (1) -- (3);
\draw[-|, shorten <= 2pt, shorten >= 2pt] (1) -- (2);
\draw[-|, shorten <= 2pt, shorten >= 2pt] (3) -- (2);
\draw[-|, shorten <= 2pt, shorten >= 1pt] (2) to [out=0, in=-90, looseness=1] (3);
\draw[-{Stealth}, shorten <= 2pt, shorten >= 2pt] (3) to [out=135, in=45, looseness=1] (1);
\draw[-{Stealth}, shorten <= 2pt, shorten >= 2pt] (3) to [out=90, in=30, looseness=6] (3);
\end{tikzpicture}
}
\end{picture}
\caption{ In each column the top network has type I and type II edges and at the bottom is the same network interpreted with type $0$ edges preserving the sign of the interaction. A dashed edge indicates that its end effect is on the decay rate of the receiving network node.}
\label{fig:examples}
\end{figure}

In this section we present three examples of small networks with type I and type II edges, see top row of Figure~\ref{fig:examples}. The bottom row of Figure~\ref{fig:examples} are the corresponding networks where type 1 and 2 edges were replaced by type 0 edges.

In Figure~\ref{fig:examples}(a) the $3 \to 1$ and $3 \dashv 2$ are type 0 edges. The edge $1 \to 2$ is type I edge where node $1$ up-regulates the decay of $2$. Node $2$ changes  activity of $1$ via a type II edge. The fact that the  edge $1 \to 3$ is up-regulating means that the active version of $1$ up-regulates $3$; the fact that the node on this edge is filled means that the effect of $2$ is to change $1$ from its inactive form to an active form. 

In Figure~\ref{fig:examples}(b) is the network from (a) with only type $0$ edges that captures the same overall effect between the nodes. In particular, since type 1 edge $1\to 2$  in (a) up-regulates decay, it is replaced by a negative type $0$ edge. The type II edges from $1$ and $2$ that affect (3) are replaced by a pair of positive type $0$ edges.

In Figure~\ref{fig:examples}(c) the only change in comparison to (a) is that the node in the $1\to 3$ edge is empty, which means that $2$ facilitates transition from active to inactive form of $1$. As a result, the overall effect from $2$ to $3$ is negative and therefore in Figure~\ref{fig:examples}(d) this influence is represented by a negative type $0$ edge.

The last pair of networks is in Figures~\ref{fig:examples}(e) and (f). 
In (e) node $2$ up-regulates the transition from the inactive to the active form of $1$, which promotes decay of $3$. Therefore the corresponding network in Figure~\ref{fig:examples}(f) has two negative edges $2\dashv 3$ and $1 \dashv 3$.

As described above for each network we compute the decomposition of parameter space into domains (i.e  parameter nodes) and for each of them we compute the state transition graph and the Morse graph. We characterize Morse nodes by their dynamic phenotypes: 
\begin{itemize}
\item When a Morse node consists of a single domain $\kappa$ with  a self-edge, we designate it FP (for ``Fixed Point'');
\item When a Morse node consists of a set of domains with a path such that along that path at least one threshold $\theta_{*,i}$ is crossed for every regulatory network vertex $i$, we designate the Morse node FC (for ``Full Cycle'');
\item If the only paths within a Morse node are such that a proper subset of variables cross thresholds along a path, we designate such Morse node PC (for ``Partial Cycle'').
\end{itemize}
We are most interested in describing attractors in dynamics and therefore we concentrate on Morse nodes that are leaves in the Morse graph; we call such Morse nodes stable. 
 Table~\ref{table:dsgrn_computations} presents basic statistics on the number of stable Morse nodes  (FC, PC, and FP) for the networks in Figure~\ref{fig:examples}.

\begin{table}[!htbp]
\centering
\renewcommand{\arraystretch}{1.2}
\begin{tabular}{@{}lllllll@{}}
\toprule
\midrule
Network & Fig.~\ref{fig:examples}(a) & Fig.~\ref{fig:examples}(b) & Fig.~\ref{fig:examples}(c) & Fig.~\ref{fig:examples}(d) & Fig.~\ref{fig:examples}(e) & Fig.~\ref{fig:examples}(f) \\
\midrule
Number of Parameters & $864$ & $2880$ & $864$ & $2880$ & $21600$ & $305424$ \\
\midrule
Stable FC & $1.39\%$ & $4.58\%$ & $0\%$ & $0\%$ & $0\%$ & $1.49\%$ \\
Stable PC & $4.17\%$ & $9.17\%$ & $0\%$ & $0\%$ & $3.83\%$ & $7.85\%$ \\
\midrule
0 stable FP & $5.55\%$ & $13.19\%$ & $0\%$ & $0\%$ & $3.07\%$ & $7.62\%$ \\
1 stable FP & $90.74\%$ & $78.89\%$ & $87.04\%$ & $74.72\%$ & $61.59\%$ & $56.41\%$ \\
2 stable FP & $3.70\%$ & $7.92\%$ & $12.96\%$ & $23.61\%$ & $33.0\%$ & $31.16\%$ \\
3 stable FP & $0\%$ & $0\%$ & $0\%$ & $1.67\%$ & $2.33\%$ & $4.81\%$ \\
\bottomrule
\end{tabular}
\caption{Computations for the networks in Figure~\ref{fig:examples}. The second row from the top shows the number of parameter nodes in the parameter graph of the corresponding network. The remaining rows show the percentage of parameter nodes with the type of dynamics indicated in the first column.}
\label{table:dsgrn_computations}
\end{table}

Comparing paired networks, (a)-(b), (c)-(d), and (e)-(f), we observe that the size of the parameter graphs are different. In addition, the number of parameter nodes with stable FC, stable PC, zero stable FP, one stable FP, bi-stability and tri-stability between FPs, are all different. In addition, if these counts are expressed  as a percentage of all parameter nodes, the percentages are also all different. 

We conclude that the new modality of DSGRN of modeling type I and type II edges in addition to type $0$ edges can lead to different results in characterization of network dynamics. When the type I and type II edges
describe more closely the underlying biological system, these results may offer a more detailed understanding of the network function.

\section{Singular Limits of ODE Models}
\label{sec:ODE}

Up to this point in the paper we have consciously avoided writing down explicit ODE models.
The motivation for this is twofold: (i) the focus of the paper is on the challenge {\bf C2}, developing a computational framework in which to identify the global dynamics of complex regulatory networks, and (ii) to emphasize that the results are independent of specific models or applications.
However, as indicated in the introduction in the context of applications we must also address the challenge {\bf C1}.
Our primary motivation for developing the DSGRN software has come from systems biology and therefore we return to this subject to identify how a user may make use of the extended DSGRN tools to analyze networks involving transcriptional and post-transcriptional regulation.

Recall that a common ODE model describing the up regulation  of production of the protein of gene $n$ by the protein of  gene $m$ has form
\begin{equation}
\label{eq:hilln}
    \dot{x}_n = -\gamma_n x_n + \ell_{n,m} + \delta_{n,m} \frac{x_{m}^h}{\theta_{n,m}^h + x_{m}^h}
\end{equation}
where the nonlinearity is the classical Hill function.
Even this minimal model has a multitude of parameters $\gamma$, $\theta$, $\ell$, $\delta$ and $h$. 
In the search for qualitative insight, it makes sense to sacrifice functional form for ease of analysis. 
One direction is to consider the limit $h\to\infty$ that results in an equation of the form
\begin{equation}
\label{eq:switching}
    \dot{x}_n = -\gamma_n x_n + \begin{cases}
    \ell_{n,m} & \text{if $x_m < \theta_{n,m}$} \\
    \ell_{n,m} +\delta_{n,m} & \text{if $x_m > \theta_{n,m}$.} 
    \end{cases} 
     = -\gamma_n x_n +\lambda^+(x_m; \ell_{n,m},\delta_{n,m}, \theta_{n,m}).
\end{equation}
where notation $\lambda^+$ extends definition  (\ref{eq:lambdaplusminus})
by including explicit parameter values in the argument.
Similarly, a limit $h\to\infty$  of the equation 
\begin{equation}
\label{eq:hilln-neg}
    \dot{x}_n = -\gamma_n x_n + \ell_{n,m} + \delta_{n,m} \frac{\theta_{n,m}^h}{\theta_{n,m}^h + x_{m}^h}
\end{equation}
results in an equation 
\begin{equation}
\label{eq:switching2}
    \dot{x}_n = -\gamma_n x_n + \begin{cases}
    \ell_{n,m} & \text{if $x_m > \theta_{n,m}$} \\
    \ell_{n,m} +\delta_{n,m} & \text{if $x_m < \theta_{n,m}$.} 
    \end{cases} 
     = -\gamma_n x_n +\lambda^-(x_m; \ell_{n,m},\delta_{n,m}, \theta_{n,m}).
\end{equation}

Models of regulatory networks using equations of the form of \eqref{eq:switching}-\eqref{eq:switching2} are typically called switching systems and have been analyzed and applied~\cite{Thomas1991,edwards00,deJong2002,velfingstad07} since their introduction by Glass and Kaufmann \cite{GL_original_paper,Glass1972}.
The development of the current version of the DSGRN software was motivated by switching systems.
However, there is a fundamental difference in that we are not interested in and do not solve for trajectories of switching system ODEs.
Instead, as indicted in Section~\ref{sec:intro} we retreat to a combinatorial model that only makes use of the sign of the right hand side, e.g.\ \eqref{eq:toggleSwitch}.
The more general $\Lambda_n$ described in Section~\ref{sec:dsgrn} can all be obtained in a similar manner, by starting with a product of sums of Hill functions, each of which expresses an edge interaction, 
and allowing  the Hill exponents $h$ to become 
arbitrarily large.

As is discussed in \cite{gameiro:gedeon:kepley:mischaikow} and  \cite{diegmiller}
the dynamics captured by the DSGRN computations is highly suggestive of the dynamics exhibited by the ODEs even for moderate levels of exponent $h$ in the Hill functions.

To provide motivation for the expressions for the extended version of DSGRN developed in this paper we turn to the theory of multi-site protein control.
For simplicity of notation we assume that the protein being controlled is produced by Node 1 and that the associated protein, Protein 1, is controlled by Proteins 2 and 3.
\begin{description}
    \item[P1] Protein 1 has $M$ sites and the activity level  of Protein 1 is governed by how many sites are filled. In particular, Protein 1 is $\mathsf{off}$ if less than $K$ of the sites are occupied and Protein 1 is $\mathsf{on}$ if $K$ or more sites are occupied.
    \item[P2] Protein 2 and Protein 3 act as enzymes, with Protein 2 filling sites and Protein 3 emptying sites.
\end{description}
A guiding example for this arrangement may be multi-site phosphorylation  by  a kinase (Protein 2) and de-phosphorylation by a phosphatase (Protein 3). 
We denote  the concentration of Proteins 1 - 3 by $x_1$, $x_2$, and $x_3$, respectively.
To be more precise we let $x_1^{(m)}$, $m=0,\ldots, M$, denote the concentration of Protein 1, that has $m$ sites filled.
Motivated by assumption {\bf P1} we set
\[
x_1^{\mathsf{off}} = \sum_{m=0}^{K-1}x_1^{(m)}\quad\text{and}\quad
x_1^{\mathsf{on}} = \sum_{m=K}^{M}x_1^{(m)}
\]
We model assumption {\bf P2} by the following system of chemical reactions
\begin{align*}
    x_1^{(m)} + x_2 \xrightleftharpoons[k_-]{k_+} & x_2x_1^m  \mapright{k} x_1^{(m+1)} + x_2 \\
    x_1^{(m+1)} + x_3 \xrightleftharpoons[l_-]{l_+} & x_3x_1^{m+1}  \mapright{l} x_1^{(m)} + x_3
\end{align*}
and set
\[
\alpha =   \frac{kk_+(l+l_-)}{ll_+(k+k_-)}. 
\]
Following the  analysis in  \cite{wang:nie:enciso}\footnote{This paper also provides biological relevance for this modeling approach.}
we obtain that for $K=2M-1$, the concentration of the active version $ x_1^{\mathsf{on}}$ of $x_1$ is given by
\begin{equation}
    \label{eq:genericRatio}
    x_1^{\mathsf{on}} = \frac{(\alpha\frac{x_2}{x_3})^K}{1 + (\alpha\frac{ x_2}{x_3})^K} \, x_1.
\end{equation}
Finally, taking the limit as $M\to \infty$ while keeping $M=2K-1$ allows us to write $x_1^{\mathsf{on}}$ as a function of the variable $\alpha \frac{x_2}{x_3}$
\begin{equation}
\label{eq:genericstep}
x_1^{\mathsf{on}}\left(\alpha \frac{x_2}{x_3} \right)  =
\sigma^+ \left(\alpha \frac{x_2}{x_3};1 \right) x_1 = \sigma^+ \left(\frac{x_2}{x_3}; \frac{1}{\alpha} \right)x_1
\end{equation}
where
\[
\sigma^+(\xi;\zeta):= \begin{cases} 
0 & \text{if $\xi < \zeta$} \\
1 & \text{if $\xi > \zeta$}.
\end{cases}
\]  

\subsection{Type I: Decay control edges}

As indicated in the introduction we  use the analysis from the previous Section to  model the  modulation of the decay rate, see  Figure~\ref{fig:RN+}(b). In biological setting, the decay up-regulation is accomplished by ubiquitination by an enzyme ubiquitin ligase, and decay down-regulation by  de-ubiquitination by ubiquitin-specific protease.

Let $x_1$ denote the total concentration of quantity associated to node 1 that is undergoing up-regulation of its decay rate by $x_3$ and down-regulation of its decay rate by $x_2$.
We restrict our attention to processes where only one of $x_2$ and $x_3$ is actively controlled. 
We first focus on control of the decay up-regulation
and assume 
\begin{description}
\item[A3.1] \textit{ The concentration of  decay down-regulator 
$x_3$ is a constant $y$. }
\end{description}
Under assumptions {\bf A1},  {\bf A2} and {\bf A3.1} we can rewrite \eqref{eq:genericstep} as
\begin{equation}
    \label{eq:ubquitination}
    x_1^{\mathsf{on}}(x_2) = \sigma^+(x_2;\beta_{1,2})x_1
\end{equation}
where $\beta_{1,2} = y/\alpha$.

To incorporate this into DSGRN we assume that the decay rate for $x_1$ 
in the absence of $x_2$ is $\gamma_1$. 
Thus \eqref{eq:switching} becomes 
\begin{align*}
    \dot{x}_1 &= -\gamma_1 x_1 + \delta_{1,2}\sigma^+(x_2;\beta_{1,2})x_1 + \Lambda_1(x) = 
    -\left(
    \begin{cases}
    \gamma_1 & \text{if $x_2 < \beta_{1,2}$} \\
    \gamma_1 +\delta_{1,2} & \text{if $x_2 > \beta_{1,2}$}
    \end{cases}
    \right)x_1 + \Lambda_1(x)\nonumber \\
    &=
    -\lambda^+(x_2; \gamma_1, \delta_{1,2},\beta_{1,2})x_1 + \Lambda_1(x)
\end{align*}

We model decay down-regulation 
in a similar manner. We make an assumption
\begin{description}
\item[A3.2] \textit{The concentration of  decay up-regulator 
$x_2$ is constant $y$.}
\end{description}
Under assumptions {\bf A1},  {\bf A2} and {\bf A3.2} can rewrite \eqref{eq:genericstep} as
\begin{equation}
    \label{eq:deubquitination}
    x_1^{\mathsf{off}}(x_3) = \left(1-\sigma^+(x_3;\beta_{1,3})\right)x_1
\end{equation}
where $\beta_{1,3} = \alpha y$.
 We rewrite the equation (\ref{eq:deubquitination}) one more time as 
\begin{equation}
    \label{eq:deubquitination2}
    x_1^{\mathsf{off}}(x_3) =\sigma^-(x_3;\beta_{1,3})x_1
\end{equation}
where
\[
\sigma^-(\xi;\zeta):= \begin{cases} 
    1 & \text{if $\xi < \zeta$} \\
    0 & \text{if $\xi > \zeta$} .
    \end{cases}
\]    
To incorporate this into DSGRN we assume that the decay rate for Protein 1 in the absence of $x_3$ is $\gamma_1$.
Then \eqref{eq:switching} becomes 
\begin{align*}
    \dot{x}_1 &= -(\gamma_1 + \delta_{1,3}\sigma^-(x_3;\beta_{1,3}))x_1 + \Lambda_1(x) = 
    -\left(
    \begin{cases}
    \gamma_1 & \text{if $x_3 > \beta_{1,3}$} \\
    \gamma_1 +\delta_{1,3} & \text{if $x_3 < \beta_{1,3}$}
    \end{cases}
    \right)x_1 + \Lambda_1(x)\nonumber \\ 
    &=-\lambda^-(x_3;\gamma_1,\delta_{1,3},\beta_{1,3})x_1 + \Lambda_1(x). 
\end{align*}

\subsection{Type II: Activity control edges}

We now turn our attention to modeling modulation of 
activity level, see Figure~\ref{fig:RN+}(c)-(d). In the biological context there are several modifications  that affect activity of a protein.
This  is often achieved by binding an additional molecule or a group to an existing protein and modifying its properties. Examples include  phosphorylation, methylation, glycosylation, lipidation  and  other modifications.

We will first consider interactions shown in Figure~\ref{fig:RN+}(c) where the $x_1^\mathsf{on}$   is the active form of $x_1$. In the same way as we did for the  decay control processes, we assume that only one of the processes  controlling the  activation vs. deactivation is actively controlled.  We start with 
\begin{description}
\item[A3.3] \textit{The concentration $x_3$ remains constant.}
\end{description}
Under assumptions {\bf A1},  {\bf A2} and {\bf A3.3} we can rewrite \eqref{eq:genericstep} as
\begin{equation}
    \label{eq:phosphorylation}
    x_1^{\mathsf{on}}(x_2) = \sigma^+(x_2;\beta_{1,2})x_1
\end{equation}
where $\beta_{1,2} = x_3/\alpha$.
On the other hand, if we assume 
\begin{description}
\item[A3.4] \textit{The concentration $x_2$ remains constant.}
\end{description}
Then, under assumptions {\bf A1},  {\bf A2} and {\bf A3.4} we can rewrite \eqref{eq:genericstep} as
\begin{equation}
    \label{eq:dephosphorylation}
    x_1^{\mathsf{on}}(x_3) = \sigma^+ \left( \frac{1}{x_3};\frac{1}{\beta_{1,3}} \right) x_1
    = \sigma^-(x_3; \beta_{1,3})x_1
\end{equation}
where $\beta_{1,3} = \alpha x_2$.

To incorporate this into DSGRN we assume that the production rates for $x_4$ by $x_1$, when $x_1$ is in the inactive state  $\mathsf{off}$ and active state $\mathsf{on}$ are given by $\ell_{4,1}$ and $\ell_{4,1}+\delta_{4,1}$, respectively.

Then (\ref{eq:switching}) for $n=4$ is
\begin{align} \label{eq:x4equation}
    \dot{x}_4 &= -\gamma_4 x_4 + \Lambda_4(x_1)\nonumber  \\
   &= -\gamma_4 x_4 + \begin{cases} 
\ell_{4,1} & \text{if $x_1 < \theta_{4,1}$}\\
\ell_{4,1} + \delta_{4,1} & \text{if $x_1 > \theta_{4,1}$}
\end{cases}
\end{align}

Under the assumption {\bf A3.3} this becomes 
\begin{align*}
\dot{x}_4 &= -\gamma_4 x_4 + \Lambda_4(\sigma^+(x_2;\beta_{1,2})x_1)\\
&= -\gamma_4 x_4 + 
\begin{cases}
\ell_{4,1} & \text{if $x_2 < \beta_{1,2}$}\\
\ell_{4,1} & \text{if $x_1 < \theta_{4,1}$}\\
\ell_{4,1} + \delta_{4,1} & \text{if $x_1 > \theta_{4,1}$ and $x_2 >\beta_{1,2}$.}
\end{cases}\\
&=-\gamma_4 x_4 + [\lambda^+(x_1),\lambda^+(x_2)],
\end{align*}
where notation $[\cdot,\cdot]$ has been introduced in \eqref{eq:and}.
 
On the other hand, under the assumption {\bf A3.4} we have
\begin{align*}
     \dot{x}_4 &= -\gamma_4 x_4 + \Lambda_4(\sigma^-(x_3;\beta_{1,3})x_1)\\
     &=  -\gamma_4 x_4 + \begin{cases}
    \ell_{4,1} & \text{if $x_3 >\beta_{1,3}$}\\
    \ell_{4,1} & \text{if $x_1 < \theta_{4,1}$}\\
    \ell_{4,1} + \delta_{4,1} & \text{if $x_1 > \theta_{4,1}$ and $x_3 < \beta_{1,3}$.}
    \end{cases}\\
    &=  -\gamma_4 x_4 + [\lambda^+(x_1),\lambda^-(x_3)]
     \end{align*}
     
    For the second type of interactions in Figure~\ref{fig:RN+}(c) the equation (\ref{eq:x4equation}) has reversed inequalities with respect to threshold $\theta_{4,1}$
\begin{align*}
    \dot{x}_4 &=  -\gamma_4 x_4 + \begin{cases} 
\ell_{4,1} & \text{if $x_1 > \theta_{4,1}$}\\
\ell_{4,1} + \delta_{4,1} & \text{if $x_1 < \theta_{4,1}$}.
\end{cases}
\end{align*}

  This change of inequality persists into the  functions
  $\Lambda_4(\sigma^+(x_2;\beta_{1,2})x_1)$ under the  assumption {\bf A3.3} and to function $\Lambda_4(\sigma^-(x_3;\beta_{1,3})x_1)$  under the assumption {\bf A3.4}, resulting in 
  \[
      \dot{x}_4 =  -\gamma_4 x_4 +  [\lambda^-(x_1),\lambda^+(x_2)]
  \]
  and 
   \[
      \dot{x}_4 =  -\gamma_4 x_4 +  [\lambda^-(x_1),\lambda^-(x_3)],
    \]
  respectively.

     For interactions in the left side of  Figure~\ref{fig:RN+}(d) it is the $x_1^\mathsf{off}$ form of $x_1$ that  activates production 
     of $x_4$. Since $x_1^\mathsf{on} + x_1^\mathsf{off} = x_1$ under the assumption {\bf A3.3} we get 
     \[\dot{x}_4 = -\gamma_4 x_4 + \Lambda_4(\sigma^-(x_2;\beta_{1,2})x_1) = -\gamma_4 x_4 +  [\lambda^+(x_1),\lambda^-(x_2)] ,\]
     and under the assumption {\bf A3.3} 
\[
\dot{x}_4 = -\gamma_4 x_4 + \Lambda_4(\sigma^+(x_3;\beta_{1,3})x_1 = -\gamma_4 x_4 + [\lambda^+(x_1),\lambda^+(x_3)]
\]

Analogous to Figure~\ref{fig:RN+}(c), interactions indicated by   the right half of Figure~\ref{fig:RN+}(d) will produce functions
     \[  [\lambda^-(x_1),\lambda^-(x_2)] \quad \mbox{ and } \quad   [\lambda^-(x_1),\lambda^+(x_3)]
     \]
respectively.

We close this section with a remark that when $y_1^{\mathsf{on}}$ regulates activity, rather than abundance, of protein $y_2$ which in turn regulates activity of $y_3$ along a chain of interactions until $y_{n-1}$, which eventually regulates transcription of $y_n$, then the equation for $y_n$ will be
\begin{equation}
\label{composition}
\dot{y}_n = -\gamma_n y_n + \Lambda( (\sigma^\pm_{n-1}\circ \ldots \circ \sigma^\pm_1) (y_1^{\mathsf{on}}; \beta_{1,2})y_n),
\end{equation}
where $(\sigma^\pm_{n-1}\circ \ldots \circ \sigma^\pm_1) (y_1^{\mathsf{on}};\beta_{1,2})$ is the $(n-1)$-fold composition of functions $\sigma^+(\; \boldsymbol{\cdot} \; ; \beta_{1,2})$ or $\sigma^-(\; \boldsymbol{\cdot} \; ; \beta_{1,2})$, as functions of the first argument for a fixed value of the parameter $\beta_{1,2}$, depending on the type of control in Figure~\ref{fig:RN+}(c) or (d).

We generalize the function
\[
\Gamma(\zeta_1, \ldots, \zeta_n) := \min \{ \zeta_1, \ldots, \zeta_n \}
\]
and (note identical values of $\ell$ and $\delta$) set
\begin{align*}
[\lambda(z_1),\lambda(z_2),\ldots,\lambda(z_{n}))] 
& :=  \Gamma(\lambda(z_1, \ell, \delta, \theta_1),\lambda(z_2, \ell, \delta, \theta_2),\ldots,\lambda(z_{n}, \ell, \delta, \theta_{n})).
\end{align*}
Then equation \eqref{composition} becomes
\[
\dot{y}_n = -\gamma_n y_n + [\lambda^\pm(y_1),\lambda^\pm(y_2),\ldots,\lambda^\pm(y_{n})],
\]
where the sign of the function $\lambda^\pm(y_j)$ matches the sign of $\sigma^\pm_j(y_j;\beta_{j,j+1})$ for $j=1, \ldots, n-1$, and the sign sign of $\lambda^\pm(y_n)$ matches the sign of the function $\Lambda(y_n)$. Similarly, the threshold parameters $\theta_j$, $j=1, \ldots, n-1$, are the  activation parameters $\beta_{j,j+1}$ and $\theta_n$ is the threshold of the function $\Lambda(y_n)$.

\section{Conclusions}
\label{sec:conclusions}

Networks are a useful abstraction expressing partial knowledge about internal correlation (undirected edges) or causal (directed edges) structure of complex systems. In addition, in gene regulatory networks edges are directed and signed, where sign denotes up- or down-regulation. 
While networks sometimes express static information like correlations, often one is interested in the dynamical behavior of the system described by the network. One of the ways to associate dynamics to a network is to represent each vertex by a continuous variable with a linear decay rate, each edge by a (nonlinear) monotone function, where the sign of the derivative matches the sign of the edge, and study a set of ordinary differential equations with this  structure. One can view such a system as one where variables represent \textit{abundance} with a rate of change responding to associated in-edges. In the context of gene regulation, this type of model corresponds to transcriptional regulation of genes, where an increase in the concentration of activators increases the rate of production from a particular gene, and an increase in the concentration of repressors decreases such a rate. 

Understanding the dynamics of such systems, especially in systems with several to dozens of variables, is notoriously difficult. In particular, the dynamics of network models can vary widely with selection of nonlinearities as well as parameters, which are mostly unknown and lie in high dimensional space. Motivated by gene networks, and building on previous work on Boolean networks and switching systems~\cite{albert:collins:glass,Glass1972,Thomas1991,Ironi2011,edwards00,deJong2002}, we developed DSGRN. DSGRN assigns combinatorial dynamics to a network, of a type depending on a finite decomposition of the parameter space \cite{cummins:gedeon:harker:mischaikow:mok,Cummins2017b,gedeon:harker:kokubu:mischaikow:oka,gameiro:gedeon:kepley:mischaikow,gedeon:2020,gedeon:cummins:harker:mischaikow:plos}. The finiteness of this calculation allows complete enumeration of types of dynamics compatible with the network. However, in spite of the finiteness of representation, the computed invariant, the Morse graph, is valid for a large class of ODE models. In particular, it has been shown that the Morse graph provides information about a Morse decomposition of nearby smooth systems of differential equations in $2$ dimensions~\cite{gedeon:harker:kokubu:mischaikow:oka}, the generalization of this result to higher dimensions is forthcoming. In the case where the Morse nodes indicate the presence of stable equilibria, these equilibria do exist for nearby smooth differential equations~\cite{duncan1,duncan2}.

Again motivated by gene regulation, this paper extends the combinatorial DSGRN approach to a significantly larger class of network interactions. In cellular regulatory networks, the abundance of a particular protein may be constant, but its \emph{activity} may be carefully regulated by say, phosphorylation, methylation or other type of post-transcriptional or post-translational regulation. In addition, the decay of a protein may be actively regulated as well. 
We use modeling work on multi-site phosphorylation and ubiquitination to derive an appropriate combinatorial model for activity and decay regulation.
DSGRN relies on precomputed logic files that encode all linear orders of a set of polynomials associated to a network vertex with a particular number of in- and out-edges. Based on our analysis we show how we modify and then use these precomputed logic files to support combinatorial model for activity and decay regulation.

Finally, we provide a comparison of dynamics between networks that include either activity or decay regulation, and the corresponding networks with the same types of edges but that only regulate abundance. It is clear from these results that the dynamics can be very different and therefore the new capability will allow a more precise delineation of network dynamics in a wide range of applications. 

\section*{Acknowledgments}
B.C. and T.G. were  partially supported by  NSF grant DMS-1839299,  DARPA FA8750-17-C-0054 and NIH 5R01GM126555-01.
The work of M.G., S.K., K.M., and L.Z. was partially supported by the National Science Foundation under awards DMS-1839294 and HDR TRIPODS award CCF-1934924, DARPA contract HR0011-16-2-0033, and National Institutes of Health award R01 GM126555. K.M. is also supported by a grant from the Simons Foundation. The work of M.G. was also partially supported by FAPESP grant 2019/06249-7 and by CNPq grant 309073/2019-7.

B.C. and T.G. acknowledge the Indigenous nations and peoples who are the traditional owners and caretakers of the land on which this work was undertaken at Montana State University.

\bibliographystyle{plain}
\bibliography{references,ptm_ref}

\end{document}